\tikzstyle{block} = [rectangle, draw, text width=4em, text centered, rounded corners, minimum height=2em]
\tikzstyle{line} = [draw, -latex']
\def\R{\mathbb{R}}
\begin{document}

\title*{Optimal control of vaccination and plasma transfusion with potential usefulness for COVID-19\thanks{%
This is a preprint of a paper whose final and definite form is published in
\emph{Analysis of Infectious Disease Problems (Covid-19) and Their Global Impact},
Springer Nature Singapore Pte Ltd.\\ 
Submitted 15/July/2020; revised 02/Oct/2020; accepted 08/Oct/2020.}}

\titlerunning{Optimal control of vaccination and plasma transfusion}

\author{Juliana Couras, Iv\'an Area, Juan J. Nieto, Cristiana J. Silva and Delfim F. M. Torres}

\authorrunning{J. Couras, I. Area, J. J. Nieto, C. J. Silva and D. F. M. Torres} 

\institute{Juliana Couras
\at University of Aveiro, 
3810-193 Aveiro, Portugal, 
\email{julianacouras@ua.pt}
\and 
Iv\'an Area 
\at Departamento de Matematica Aplicada II, 
E. E. Aeronaautica e do Espazo, Campus de Ourense, 
Universidade de Vigo, 32004 Ourense, Spain 
\email{area@uvigo.es}
\and 
Juan J. Nieto 
\at Instituto de Matematicas, Universidade de Santiago de Compostela, 
15782 Santiago de Compostela, Spain, 
\email{juanjose.nieto.roig@usc.es}
\and 
Cristiana J. Silva 
\at Center for Research and Development in Mathematics and Applications (CIDMA), 
Department of Mathematics, University of Aveiro, 
3810-193 Aveiro, Portugal, 
\email{cjoaosilva@ua.pt}
\and 
Delfim F. M. Torres 
\at 
Center for Research and Development in Mathematics and Applications (CIDMA), 
Department of Mathematics, University of Aveiro, 3810-193 Aveiro, Portugal, 
\email{delfim@ua.pt}}


\maketitle

\abstract*{The SEIR model is a compartmental model used to 
simulate the dynamics of an epidemic. In this chapter, 
we introduce two control functions in the compartmental SEIR model 
representing vaccination and plasma transfusion. Optimal control problems 
are proposed to study the effects of these two control measures, 
on the reduction of infected individuals and increase of recovered 
ones, with minimal costs. Up to our knowledge, the plasma transfusion 
treatment has never been considered as a control strategy for epidemics mitigation. 
The proposed vaccination and treatment strategies may have a real application 
in the challenging and hard problem of controlling the COVID-19 pandemic.}

\abstract{The SEIR model is a compartmental model used to 
simulate the dynamics of an epidemic. In this chapter, 
we introduce two control functions in the compartmental SEIR model 
representing vaccination and plasma transfusion. Optimal control problems 
are proposed to study the effects of these two control measures, 
on the reduction of infected individuals and increase of recovered 
ones, with minimal costs. Up to our knowledge, the plasma transfusion 
treatment has never been considered as a control strategy for epidemics mitigation. 
The proposed vaccination and treatment strategies may have a real application 
in the challenging and hard problem of controlling the COVID-19 pandemic.}


\section{Introduction}
\label{sec:Int}

Like many other physical and biological processes, epidemics can 
be modelled mathematically. Epidemic mathematical modelling is important, 
not only to understand the disease progression, but also to provide 
predictions about the epidemics evolution and insights about the dynamics 
of the transmission rate and the effectiveness of control measures. 
There are several compartmental models in epidemiology, like the 
$SI$, $SIR$, $SICA$ and the $SEIR$ model, see, e.g., 
\cite{brauer2011mathematical,Murray:book,SICA:2020} and references cited therein. 
In this chapter, we consider the SEIR model, where the human population 
is divided into four mutually exclusive compartments: susceptible $S$, 
latent $E$, infected $I$, and a recovered or removed (dead) $R$. 
We assume that the population is homogeneous and the various classes are uniformly mixed. 
We consider the case of constant total population $N$, that is, 
$S(t) + E(t) + I(t) + R(t) = N$ for every time $t$ in the time window $t \in [0, T]$
under study. In this case, the fraction of individuals in each compartment 
is defined as $s = S/N$, $e = E/N$, $i = I/N$ and $r = R/N$. The balance condition 
becomes $s + e + i + r = 1$. The assumptions made about the transmission of the infection 
and incubation period are reflected in the equations and parameters \cite{Murray:book} 
and are explained below. We consider the following parameters:
\begin{itemize}
\item transmission coefficient -- $\beta$;
\item infectious rate -- $\gamma$;
\item recovery rate -- $\mu$.
\end{itemize}
Then the $seir$ model is given by the following system of ordinary differential equations:  
\begin{align}
\begin{cases}
\frac{ds}{dt}(t) = -\beta \, s(t) \,i(t), \\
\frac{de}{dt}(t) = \beta \, s(t) \,i(t) - \gamma e(t), \\
\frac{di}{dt}(t) = \gamma \, e(t) - \mu \, i(t), \\
\frac{dr}{dt}(t) = \mu \, i(t) \, ,
\end{cases}
\label{eq:mod:SEIR}
\end{align}
represented graphically in the diagram of Figure~\ref{fig:diag:sir}.
\begin{figure}[ht!]
\centering
\begin{tikzpicture}
[node distance=2cm, auto]
\node[block] (s) {s};
\node[block, right=of s] (e) {e};
\node[block, right=of e] (i) {i};
\node[block, right=of i] (r) {r};
\path [line] (s) -- (e) node [midway,above] {$\beta$};
\path [line] (e) -- (i) node [midway,above] {$\gamma$};
\path [line] (i) -- (r) node [midway,above] {$\mu$} ;
\end{tikzpicture}
\caption{Diagram of the compartmental model \eqref{eq:mod:SEIR}.}
\label{fig:diag:sir}
\end{figure}
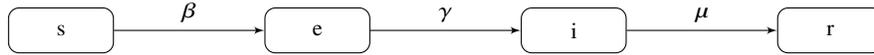
The term $\beta \, s \, i$ represents the gain in the exposed class, 
which is proportional to the fraction of infective (and infectious) 
and susceptible individuals, where the transmission coefficient $\beta > 0$ 
is a constant parameter. Individuals are transferred from the susceptible 
class $s$ to the exposed $e$ at this rate $\beta \, s \, i$. 
The incubation period is of $1/\gamma$ days, with $\gamma >0$, 
and after that time exposed individuals become infectious. The rate of removal 
of infective to the removed class is proportional to the number of infective, 
$\mu \, i$, with $\mu > 0$, where  $1/\mu$ is a measure of the time spent 
in the infectious state \cite{Murray:book}. 

The $seir$ model \eqref{eq:mod:SEIR} is an extension 
of the classical Kermack--McKendrick 1927 model  
\cite{KermackMcKendrick:1927,KermackMcKendrick:1991}, 
where the class of exposed (latent) individuals is considered.
SEIR type compartmental models have been extensively used to model 
the COVID-19 pandemic, see, e.g., \cite{covid19projectionsML,SEIR:comsol}, 
and researchers have shown that it can describe the spread of COVID-19 
in different countries: see \cite{Carcione:Lombardia} for a simulation 
of the COVID-19 spread in Lombardy (Italy) and also modifications 
of the SEIR model in \cite{Lopez:SEIR:Spain:Italy,Ng:COVID19,Prem:covid19:model}. 
Namely, in \cite{Lopez:SEIR:Spain:Italy} three classes are added for confined, 
under quarantine and COVID-19 induced deaths. The model in \cite{Ng:COVID19} 
considers the age of the population, time delay on the development of the pandemic, 
and resusceptibility to COVID-19 with temporal immune response. An age-structured 
SEIR model is proposed in \cite{Prem:covid19:model} considering 5-year bands 
until the age of 70 years and a single category aged 75 and older 
(resulting in 16 age categories for each class of individuals). 

Optimal control theory is a branch of mathematics that involves 
finding optimal ways of controlling a dynamic system \cite{Cesari,Pontryagin}. 
Optimal control has been applied to epidemiological models for many different 
infectious diseases, such as HIV/AIDS, malaria, Ebola, tuberculosis and cholera 
\cite{AreaEbola,LemosPaiao:cholera,HIVSilvaMaurer,TBHIVSilvaTorres}, 
and also non communicable diseases like cancer: see
\cite{Schattler:book} and references cited therein.  

Optimal control theory allows the study of the most cost-effective intervention 
strategy that changes the dynamics of a controlled system while minimizing 
a so-called objective function. In this chapter, we introduce two control 
functions in the $seir$ model \eqref{eq:mod:SEIR} that represent 
vaccination and plasma transfusion. Although vaccination has been widely studied 
from an optimal control point of view applied to epidemiological models, 
up to our knowledge, the plasma transfusion treatment has never been considered before. 
Plasma transfusion has been considered as a possible treatment for COVID-19, 
although it is still under study \cite{Plasma:2}. 

We propose five objective cost functionals and five corresponding optimal 
control problems for the three control systems that correspond to vaccination only, 
plasma transfusion treatment only, and combination of vaccination and plasma transfusion. 

This chapter is organized as follows. In Section~\ref{sec:2}, the vaccination and plasma 
transfusion are introduced in the $seir$ model, isolated and in combination, giving 
rise to three control systems that will be part of the optimal control problems 
proposed in Section~\ref{sec:optcontrol}. In Section~\ref{sec:num:results},
the solutions of the optimal control problems are compared numerically.
We end with Section~\ref{sec:conc} of discussion and conclusions.


\section{Control system: $seir$ model with vaccination and plasma transfusion}
\label{sec:2}

In this section, in order to control the spread of the infection, 
two types of interventions are introduced into the $seir$ 
model \eqref{eq:mod:SEIR}: vaccination $u$ and plasma transfusion $p$. 
Instead of representing the vaccination and plasma transfusion by constant 
positive parameters, we assume that vaccination and plasma transfusion are 
given by two functions $u(\cdot)$ and $p(\cdot)$, respectively, that change 
in time and that modify the dynamical behavior of model \eqref{eq:mod:SEIR}. 
In optimal control theory, functions $u(\cdot)$ and $p(\cdot)$ 
are usually called \emph{controls}. 

Starting by the vaccine, we introduce a control $u(\cdot)$ that represents 
the vaccination rate. By definition, it only makes sense to vaccinate people 
if they are susceptible to the disease. After being vaccinated, these people 
would become immune to the disease. In terms of the $seir$ model states, 
this means that an individual in the $s$ state would jump to the $r$ 
state after being vaccinated. Thus, the model must be rewritten in the following way:
\begin{align}
[\text{\emph{vaccination  based  control}}]\quad
\begin{cases}
\frac{ds}{dt}(t) = -\beta \, s(t) \,i(t) - u(t) \, s(t),\\
\frac{de}{dt}(t) = \beta \, s(t) \, i(t) - \gamma \, e(t), \\
\frac{di}{dt}(t) = \gamma \, e(t) - \mu \, i(t), \\
\frac{dr}{dt}(t) = \mu \, i(t) + u(t) \, s(t),  
\end{cases}
\label{eq:vaccines}
\end{align}
where the control function $u(\cdot)$ is bounded between 0 and $u_{\max} \leq 1$. 

Regarding treatment, the aim is to emulate a serological treatment, that is, 
a plasma transfusion. A plasma transfusion consists on infusing sick individuals 
with the blood plasma harvested from the immune individuals. Thus, in terms 
of the model, it requires that a recovered individual $r$ donates plasma 
to an infectious individual $i$. The control is the rate at which this 
transfusion happens. Let the control be $p(\cdot)$. Then, the $seir$ model 
\eqref{eq:mod:SEIR} is rewritten in the following way:
\begin{align}
[\text{\emph{plasma transfusion based control}}] \quad
\begin{cases}
\frac{ds}{dt}(t) = -\beta \, s(t) \, i(t), \\
\frac{de}{dt}(t) = \beta \, s(t) \, i(t) - \gamma \, e(t), \\
\frac{di}{dt}(t) = \gamma \, e(t) - \mu \, i(t) - p(t) \, r(t) \, i(t), \\
\frac{dr}{dt}(t) = \mu \, i(t) + p(t) \, r(t) \, i(t), 
\end{cases}
\label{eq:plasma}
\end{align}
where the control $p(\cdot)$ satisfies the control 
constraint $0 \leq p(\cdot) \leq p_{\max} \leq 1$. 

Finally, the two previous controls are considered simultaneously, 
being the resulting model the following:
\begin{align}
[\text{\emph{vaccination and plasma transfusion}}]\quad
\begin{cases}
\frac{ds}{dt}(t) = -\beta \, s(t) \, i(t) - u(t) \, s(t), \\
\frac{de}{dt}(t) = \beta \, s(t) \, i(t) - \gamma \,  e(t), \\
\frac{di}{dt}(t) = \gamma \,  e(t) - \mu \, i(t) - p(t) \, r(t) \, i(t), \\
\frac{dr}{dt}(t) = \mu \, i(t) + p(t) \, r(t) \, i(t) + u(t) \, s(t).
\end{cases}
\label{eq:vaccine-plasma}
\end{align}
The set of admissible controls functions is given by 
\begin{equation}
\label{eq:admcont}
\Omega = \left\{ \left(u(\cdot), p(\cdot) \right) \in \left( L^\infty(0, T) \right)^2 \, | \, 0 \leq u(t) 
\leq u_{\max} \, , 0 \leq p(t) \leq p_{\max} \, ,  \, \forall \, t \in [0, T]   \right\} \, .
\end{equation}


\section{Optimal control}
\label{sec:optcontrol}

Consider non-negative initial conditions for the state variables 
$(s, e, i, r) \in \left(R_0^+ \right)^4$: 
\begin{equation}
\label{eq:init}
s(0) \geq 0 \, , \quad e(0) \geq 0 \, , \quad i(0) \geq 0 \, , \quad r(0) \geq 0 \, ,
\end{equation}
where the state variables satisfy 
$s(t) + e(t) + i(t) + r(t) = 1$, for all $t \in [0, T]$. 
In order to formulate an optimal control problem, a cost functional 
needs to be proposed, which in our case we intend to maximize. 
We propose an optimal control problem for the control systems 
given by \eqref{eq:vaccines}, \eqref{eq:plasma}
or \eqref{eq:vaccine-plasma}, 
with five different $L^2$ objective functionals, denoted 
for simplicity by $J_i$, $i = 1, \ldots, 5$.
All of them are obtained from
$$
\mathcal{J}_\eta
= \int_0^T \left( \eta_1 r(t) - \eta_2 i(t)  - \eta_3 u^2(t) - \eta_4 p^2(t) \right) dt 
$$
as follows: $J_1 = \mathcal{J}_{(0,1,1,0)}$,
$J_2 = \mathcal{J}_{(1,1,1,0)}$,
$J_3 = \mathcal{J}_{(0,1,0,1)}$,
$J_4 = \mathcal{J}_{(1,1,0,1)}$,
and $J_5 = \mathcal{J}_{(0,1,1,1)}$.
Other cases of cost functionals are obviously possible,
but we found these five to be the most interesting.
We also do not consider all possible combinations between
the three control systems and the five costs to be maximized,
restricting ourselves to five optimal control problems.
Regarding the vaccination based control \eqref{eq:vaccines}, 
we consider the two objective functionals
\begin{equation}
\label{eq:J1}
J_1(u(\cdot)) = \int_0^T \left( - i(t) - u^2(t) \right) dt
\end{equation}
and
\begin{equation}
\label{eq:J2}
J_2(u(\cdot)) = \int_0^T \left( r(t) - i(t) - u^2(t) \right) dt \, .
\end{equation}
When the cost functional is considered to be $J_1$, 
the main goal of maximizing the functional is to minimize the fraction 
of infected individuals and, at the same time, the vaccination costs. We compare 
the solution to this optimal control problem with the one that maximizes $J_2$, 
that is, the one that maximize the fraction of recovered (immune) individuals and, 
simultaneously, minimizes the fraction of infected individuals and the vaccination costs. 
The numerical solutions are compared in Section~\ref{sec:num:results}. 

When only the treatment by plasma transfusion is considered, 
that is, when we focus ourselves on the control system \eqref{eq:plasma},
we use the objective functionals $J_3$ and $J_4$: 
\begin{equation}
\label{eq:J3}
J_3(p(\cdot)) = \int_0^T \left( - i(t) - p^2(t) \right) dt \, , 
\end{equation}
\begin{equation}
\label{eq:J4}
J_4(p(\cdot)) = \int_0^T \left( r(t) - i(t) - p^2(t) \right) dt \, , 
\end{equation}
where maximizing $J_3$ corresponds to minimizing the 
fraction of infected individuals and the costs associated with plasma 
transfusion treatment, and for maximizing $J_4$ the main goal 
is to maximize the fraction of recovered individuals, by treatment, and, 
at the same time, minimize the fraction of infected individuals 
with less treatment cost as possible. 

Finally, when both controls are considered simultaneously, modelled 
by the vaccination and plasma transfusion based control system 
\eqref{eq:vaccine-plasma}, the objective functional 
considered to be maximized was $J_5$: 
\begin{equation}
\label{eq:J5}
J_5(u(\cdot), p(\cdot)) = \int_0^T \left( - i(t) - u^2(t) - p^2(t) \right) dt 
\end{equation}
with the main goal to minimize the fraction of infected individuals and the costs 
associated with vaccination and plasma transfusion treatment. 

Associated to each of the cost functionals $J_i$, $i = 1, \ldots, 5$, 
we propose an optimal control problem of determining the state trajectories 
$\left(s^*(\cdot), e^*(\cdot), i^*(\cdot), r^*(\cdot)\right)$, associated to an admissible 
control $u^*(\cdot) \in \Omega$ and/or $p^*(\cdot) \in \Omega$ on the time interval $[0, T]$, 
satisfying one of the control systems \eqref{eq:vaccines}--\eqref{eq:vaccine-plasma}, 
as explained, the initial conditions \eqref{eq:init}, and maximizing the 
corresponding functional. 
The five optimal control problems are denoted by $(OC_i)$, $i = 1, \ldots 5$,
and are now summarized.
\paragraph{Vaccination based control system \eqref{eq:vaccines} and maximizing the cost functional $J_1$:}
\begin{equation}
\label{eq:cost:max:1}
\tag{$OC_1$}
J_1(u^*(\cdot)) = \max_{\Omega} \int_0^T \left( - i(t) - u^2(t) \right) \, dt.
\end{equation}
\paragraph{Vaccination based control system \eqref{eq:vaccines} and maximizing the cost functional $J_2$:} 
\begin{equation}
\label{eq:cost:max:2}
\tag{$OC_2$}
J_2(u^*(\cdot)) = \max_{\Omega} \int_0^T \left( r(t) - i(t) - u^2(t) \right) dt.
\end{equation}
\paragraph{Plasma transfusion based control system \eqref{eq:plasma} and maximizing the cost functional $J_3$:} 
\begin{equation}
\label{eq:cost:max:3}
\tag{$OC_3$}
J_3(p^*(\cdot)) = \max_{\Omega} \int_0^T \left( - i(t) - p^2(t) \right) dt.
\end{equation}
\paragraph{Plasma transfusion based control system \eqref{eq:plasma} and maximizing the cost functional $J_4$:} 
\begin{equation}
\label{eq:cost:max:4}
\tag{$OC_4$}
J_4(p^*(\cdot)) = \max_{\Omega} \int_0^T \left( r(t) - i(t) - p^2(t) \right) dt.
\end{equation}
\paragraph{Vaccination and plasma transfusion control system 
\eqref{eq:vaccine-plasma} and maximizing the cost functional $J_5$:}
\begin{equation}
\label{eq:cost:max:5}
\tag{$OC_5$}
J_5(u^*(\cdot), p^*(\cdot)) = \max_{\Omega} \int_0^T \left( - i(t) - p^2(t) - u^2(t) \right) dt. 
\end{equation}
Note that all optimal control problems have a $L^2$-cost functional, 
in other words, the integrand of the cost $J_i$, $i=1, \ldots, 5$, 
is always convex with respect to the controls $u$ and $p$. 
Moreover, the control systems \eqref{eq:vaccines}--\eqref{eq:vaccine-plasma} 
are Lipschitz with respect to the state variables $\left(s, e, i, r \right)$.
These properties ensure the existence of an optimal control 
$\left( u^*(\cdot), p^*(\cdot) \right)$  
for the optimal control problems $(OC_1)$--$(OC_5)$. 
Moreover, we apply the Pontryagin Maximum Principle (see, e.g., \cite{Pontryagin}), 
which is a first order necessary optimality condition. The obtained result, 
here formulated and proved for the optimal control 
problem $(OC_1)$, can be trivially extended to the other optimal 
control problems $(OC_i)$, $i = 2, \ldots, 5$. 

\begin{theorem}
\label{theo:PMP}
The optimal control problem $(OC_1)$ with fixed final time $T$ 
admits a unique optimal solution $\left(s^*(\cdot), e^*(\cdot), 
i^*(\cdot), r^*(\cdot)  \right)$ associated to the optimal control 
$u^*(\cdot)$ given by 
\begin{equation}
\label{eq:opt:cont}
u^*(t)  =\min\left\{  \max\left\{  0, \frac{ (\lambda_4(t) 
- \lambda_1(t))\,s(t)}{2}  \right\}, u_{\max} \right\}
\end{equation}	
on $[0, T]$, where the adjoint functions $\lambda_i$ satisfy	 
\begin{eqnarray}
\label{eq:adjointsys}
\begin{cases}
\dot{\lambda}_1(t) = -\lambda_1(t)\, \left( -i(t)\,\beta- u(t) \right) 
-\lambda_2(t) \,i\,\beta - \lambda_4(t)\,u(t) , \\
\dot{\lambda}_2(t) =  \lambda_2(t) \,\gamma-\lambda_3(t)\,\gamma ,\\
\dot{\lambda}_3(t) = 1 + \lambda_1(t)\,\beta\,s(t)
-\lambda_2(t)\,\beta\,s(t)+\lambda_3(t)\,\mu-\lambda_4(t)\,\mu,\\
\dot{\lambda}_4(t) = 0, 
\end{cases}
\end{eqnarray}
with the transversality conditions $\lambda_i(T) = 0$, $i = 1, \ldots , 4$. 
\end{theorem}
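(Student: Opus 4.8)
The plan is to apply the Pontryagin Maximum Principle (PMP) to the optimal control problem $(OC_1)$, which supplies a first-order necessary optimality condition. The existence of an optimal control is already guaranteed by the convexity of the cost integrand with respect to $u$ and the Lipschitz property of the control system, as noted just before the theorem statement (these are the standard Filippov--Cesari type hypotheses). So the substantive work is to derive the explicit characterization \eqref{eq:opt:cont} of the optimal control and the adjoint system \eqref{eq:adjointsys}, and then to argue uniqueness.

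\medskip
\noindent
First I would form the Hamiltonian for $(OC_1)$. Writing the state as $(s,e,i,r)$ with adjoint (costate) vector $(\lambda_1,\lambda_2,\lambda_3,\lambda_4)$ and using the control system \eqref{eq:vaccines}, the Hamiltonian is
\begin{equation*}
H = -i - u^2
+ \lambda_1\bigl(-\beta s i - u s\bigr)
+ \lambda_2\bigl(\beta s i - \gamma e\bigr)
+ \lambda_3\bigl(\gamma e - \mu i\bigr)
+ \lambda_4\bigl(\mu i + u s\bigr).
\end{equation*}
The adjoint system then comes from $\dot\lambda_j = -\partial H/\partial x_j$ for each state $x_j \in \{s,e,i,r\}$: differentiating $H$ with respect to $s$, $e$, $i$, $r$ in turn and negating yields exactly the four equations in \eqref{eq:adjointsys}, with the transversality conditions $\lambda_i(T)=0$ following from the final state being free and the final time $T$ fixed. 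This step is purely mechanical: the only things to watch are the sign conventions and correctly collecting the terms that contain each state variable (in particular, the ``$+1$'' in $\dot\lambda_3$ arises from $-\partial(-i)/\partial i$, and $\dot\lambda_4=0$ because $r$ appears in no right-hand side of \eqref{eq:vaccines}).

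\medskip
\noindent
Next I would derive the control characterization from the maximality condition of the PMP. Since $H$ is strictly concave in $u$ (the coefficient of $u^2$ is $-1<0$), the interior maximizer is found by solving $\partial H/\partial u = 0$, giving $-2u + (\lambda_4-\lambda_1)s = 0$, i.e. $u = (\lambda_4-\lambda_1)s/2$. Projecting this unconstrained maximizer onto the admissible interval $[0,u_{\max}]$ produces the $\min$--$\max$ clamp in \eqref{eq:opt:cont}. The final task is uniqueness of the optimal solution: here I would invoke strict concavity of $H$ in the control together with the standard argument that, for $T$ sufficiently small (or under the boundedness of states guaranteed by $s+e+i+r=1$ and the Lipschitz dynamics), the coupled state--adjoint two-point boundary value problem admits a unique solution via a contraction/Gronwall estimate. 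The main obstacle I anticipate is precisely this uniqueness claim: existence and the characterization are routine, but a fully rigorous uniqueness proof requires controlling the Lipschitz constant of the combined forward state / backward adjoint system, and the theorem as stated gives uniqueness without an explicit smallness restriction on $T$ --- so I expect the author either to restrict $T$ implicitly or to lean on the strict convexity of the $L^2$ cost to close the argument.
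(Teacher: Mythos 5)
Your proposal is correct and follows essentially the same route as the paper: existence from convexity of the $L^2$ cost plus the Lipschitz dynamics (citing Cesari), then a direct application of the Pontryagin Maximum Principle with the same Hamiltonian, the adjoint system $\dot{\lambda}_j = -\partial H/\partial x_j$ with $\lambda_i(T)=0$, and the control obtained by solving $\partial H/\partial u = 0$ and projecting onto $[0,u_{\max}]$. Your worry about uniqueness is well placed but resolved exactly as you anticipated: the paper does not prove it, instead attributing it to the boundedness of the state and adjoint functions and the Lipschitz property of the system, with a citation to Jung--Lenhart--Feng and Silva--Torres, where the standard contraction-type argument (with its implicit smallness condition on $T$) is carried out.
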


\begin{proof}
The existence of an optimal control $u^*(\cdot)$ of the optimal control problem $(OC_1)$ 
is due to the convexity of the $L^2$-cost functional $J_1$ and to the fact 
that the vaccinated based control system \eqref{eq:vaccines} is Lipschitz with respect 
to the state variables $(s, e, i, r)$: see, e.g., \cite{Cesari}. 
The uniqueness of the optimal control $u^*$ comes from the boundedness of the state 
and adjoint functions and the Lipschitz property of system \eqref{eq:vaccines} 
(see \cite{Jung:2002,SilvaTorres:2013} and references cited therein).	
According to the Pontryagin Maximum Principle, if $u^*(\cdot)$ is optimal 
for the problem $(OC_1)$ with fixed final time $T$, then there exists a nontrivial 
absolutely continuous mapping $\Lambda : [0, T] \to \R^4$, $\Lambda(t) 
= \left( \lambda_1(t), \lambda_2(t), \lambda_3(t), \lambda_4(t) \right)$, 
called the \emph{adjoint vector}, such that
$$
\dot{s} = \frac{\partial H}{\partial \lambda_1}, \quad 
\dot{e} = \frac{\partial H}{\partial \lambda_2}, \quad 
\dot{i} = \frac{\partial H}{\partial \lambda_3}, \quad 
\dot{r} = \frac{\partial H}{\partial \lambda_4} 
$$
and
$$
\dot{\lambda}_1 =- \frac{\partial H}{\partial s}, \quad 
\dot{\lambda}_2 =- \frac{\partial H}{\partial e}, \quad 
\dot{\lambda}_3 =- \frac{\partial H}{\partial i}, \quad 
\dot{\lambda}_4 =- \frac{\partial H}{\partial r} \, , 
$$
where 
\begin{equation*}
\begin{split}
H &= H(s(t), i(t), c(t), a(t)) = - i(t) - u^2(t)\\ 
&+ \lambda_1(t) \left(  -\beta \, s(t) \,i(t) - u(t) \, s(t) \right)\\
&+ \lambda_2(t) \left(  \beta \, s(t) \,i(t) - \gamma \, e(t) \right)\\
&+ \lambda_3(t) \left( \gamma \, e(t) - \mu \, i(t)  \right)\\
&+ \lambda_4(t) \left(  \mu \, i(t) + u(t) \, s(t) \right) 
\end{split} 
\end{equation*}
is the \emph{Hamiltonian}, and the maximality condition 
\begin{multline*}
H(s^*(t), e^*(t), i^*(t), r^*(t), \lambda^*(t), u^*(t)) \\
= \max_{ 0 \leq u \leq u_{\max}} H(s^*(t), e^*(t), i^*(t), r^*(t), \lambda^*(t), u(t)) 
\end{multline*}
holds almost everywhere on $[0, T]$. Moreover, the transversality conditions
$$
\lambda_i(T) = 0 \, , \quad \quad i = 1, \ldots , 4,
$$
hold. Furthermore, from the maximality condition, we have
\begin{equation*}
u^*(t)  =\min\left\{  \max\left\{  0, \frac{ (\lambda_4(t) 
- \lambda_1(t))\,s(t)}{2}  \right\}, u_{\max} \right\} \, .
\end{equation*}
The proof is concluded.
\end{proof}

To solve optimal control problems numerically, two approaches are possible: direct and indirect. 
Indirect methods are based on Pontryagin's Maximum Principle but not very much 
widespread since they are not immediately available by software packages.
We refer the reader to \cite{CarlosCampos} for the implementation of Pontryagin's 
Maximum Principle using Octave/MATLAB.
Direct methods consist in the discretization of the optimal control problem, 
reducing it to a nonlinear programming problem \cite{[16],[15]}. In the next section, 
we use the Applied Modeling Programming Language AMPL \cite{AMPL} 
to discretize the optimal control problems $(OC_i)$, $i = 1, \ldots, 5$. 
Then, the resulting nonlinear programming problems are solved using the 
Interior-Point optimization solver developed by W\"achter and Biegler \cite{IPOPT}, 
through the NEOS Server \cite{neos}. For more details on the numerical 
aspects see \cite{MaurerSilvaTorresMBE}.


\section{Numerical simulations and results}
\label{sec:num:results}

In this section, we provide numerical simulations for the solutions of the optimal 
control problems $(OC_i)$, $i = 1, \ldots, 5$, proposed in Section~\ref{sec:optcontrol}. 
The following values for the initial conditions are considered: 
\begin{equation}
\label{eq:init:cond:value}
s(0) = 0.88 \, , \quad  e(0) = 0.07 \, , \quad 
i(0) = 0.05 \, , \quad
r(0) = 0 \, , 
\end{equation}
and the parameter values
\begin{equation}
\label{eq:param}
\beta = 0.3 
\, , \quad \gamma = 0.1887 \, , \quad
 \mu = 0.1 \, .
\end{equation}
The initial conditions \eqref{eq:init:cond:value} 
were chosen arbitrarily, considering an hypothetical situation where 88\% 
of the total population is susceptible to the disease and there is a relatively small percentage 
of infected population and no recovered individuals. The parameter values are chosen in such a way 
that model \eqref{eq:mod:SEIR} simulates an epidemic outbreak, caused by a communicable disease.
Moreover, we consider the control constraints with $u_{\max} = 0.5$ 
and $p_{\max} = 0.3$, that is, the admissible controls $(u, p) \in \Omega$ 
must satisfy $0 \leq u(t) \leq 0.5$ and $0 \leq p(t) \leq 0.3$ for all $t \in [0, T]$. 

All computations have been performed with an Intel i7-4720HQ 2.60GHz processor,
8 GB of RAM, and an SSD disk of 128 GB under Windows 10, Home Edition of 64 bits.


\subsection{The $seir$ model without controls}

The $seir$ model differential equations were integrated using the \textsf{ode45} 
MATLAB routine, which is based on an explicit Runge--Kutta method \cite{matlab:ode}.
For the parameter values \eqref{eq:param}, the dynamic evolution of the uncontrolled 
system \eqref{eq:mod:SEIR} is described in 
Figures~\ref{fig:seir-no-control}--\ref{fig:individual-seir-no-control}. 
We observe that although the fraction of infected $i$ and exposed $e$ individuals tends 
to 0, after 100 units of time, more than 40 per cent of the population recovered 
or was removed (possible died from the disease). 
\begin{figure}[ht!]
\centering
\includegraphics[scale=0.4]{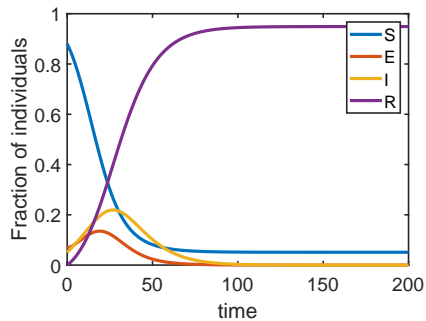}
\caption{Joint evolution of the state variables $s$, $e$, $i$ and $r$ 
of the uncontrolled model \eqref{eq:mod:SEIR}, during 200 time units.}
\label{fig:seir-no-control}
\end{figure}
\begin{figure}[ht!]
\begin{center}
\subfigure[]{\includegraphics[scale=0.3]{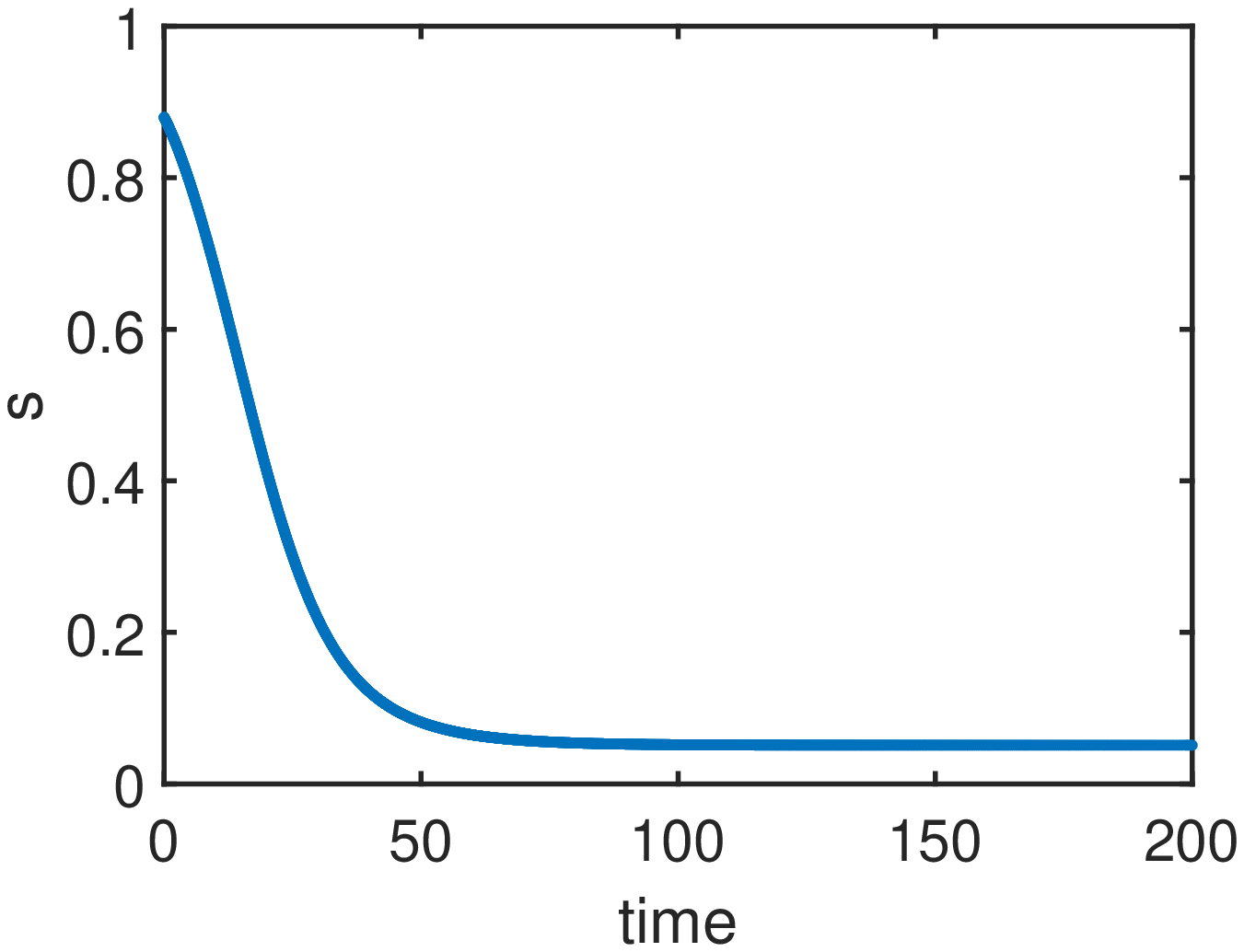}}
\quad
\subfigure[]{\includegraphics[scale=0.3]{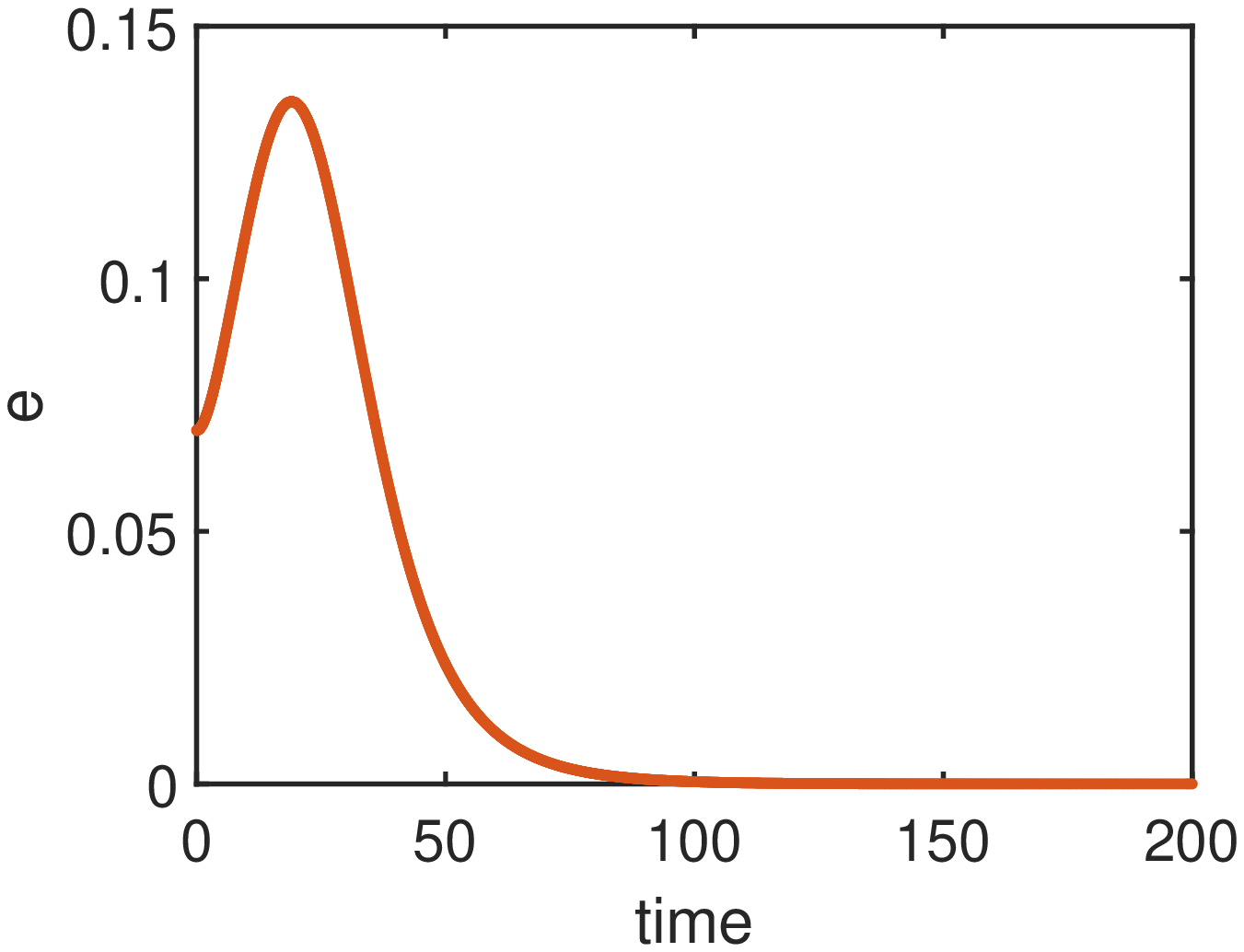}}\\
\subfigure[]{\includegraphics[scale=0.3]{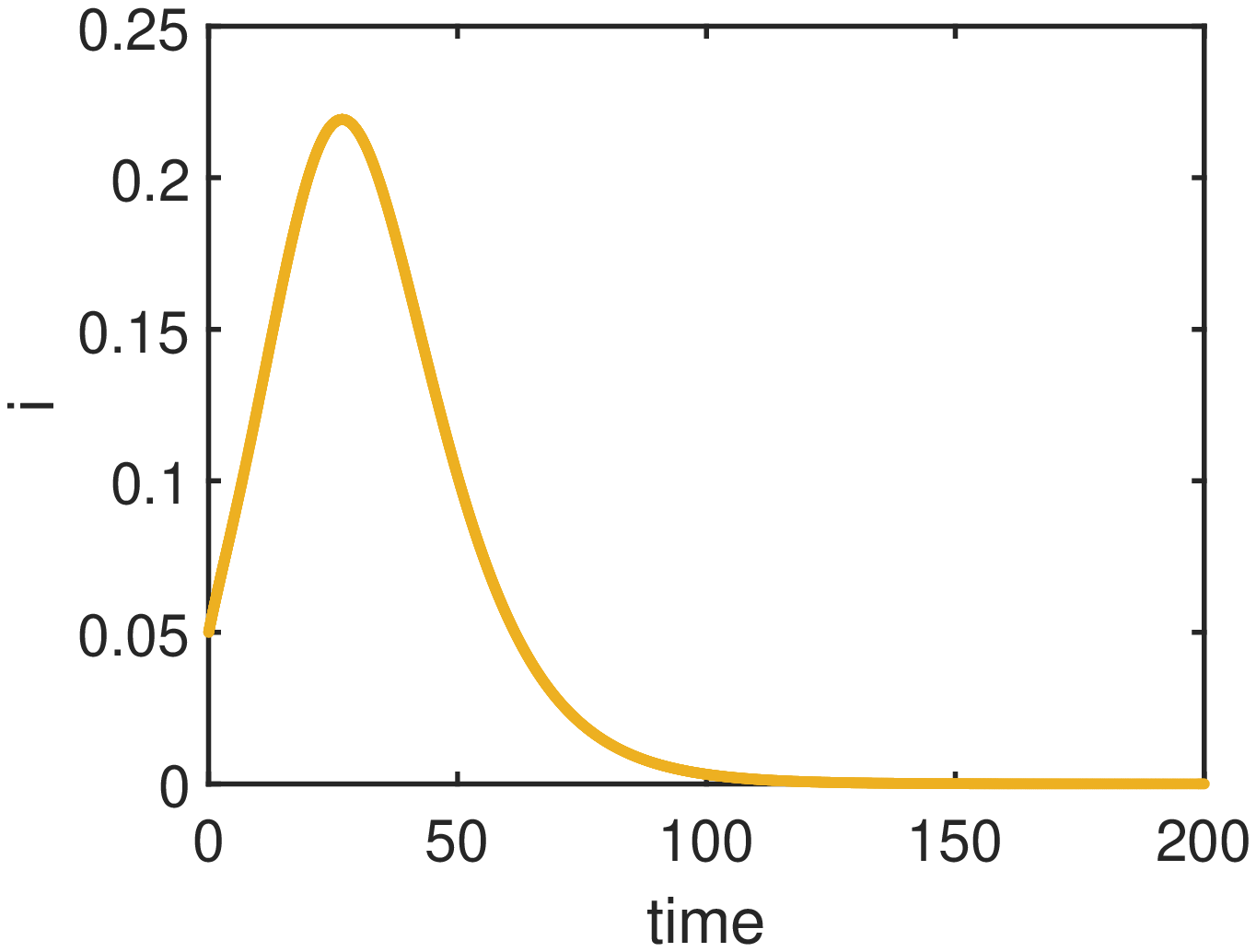}}
\quad
\subfigure[]{\includegraphics[scale=0.3]{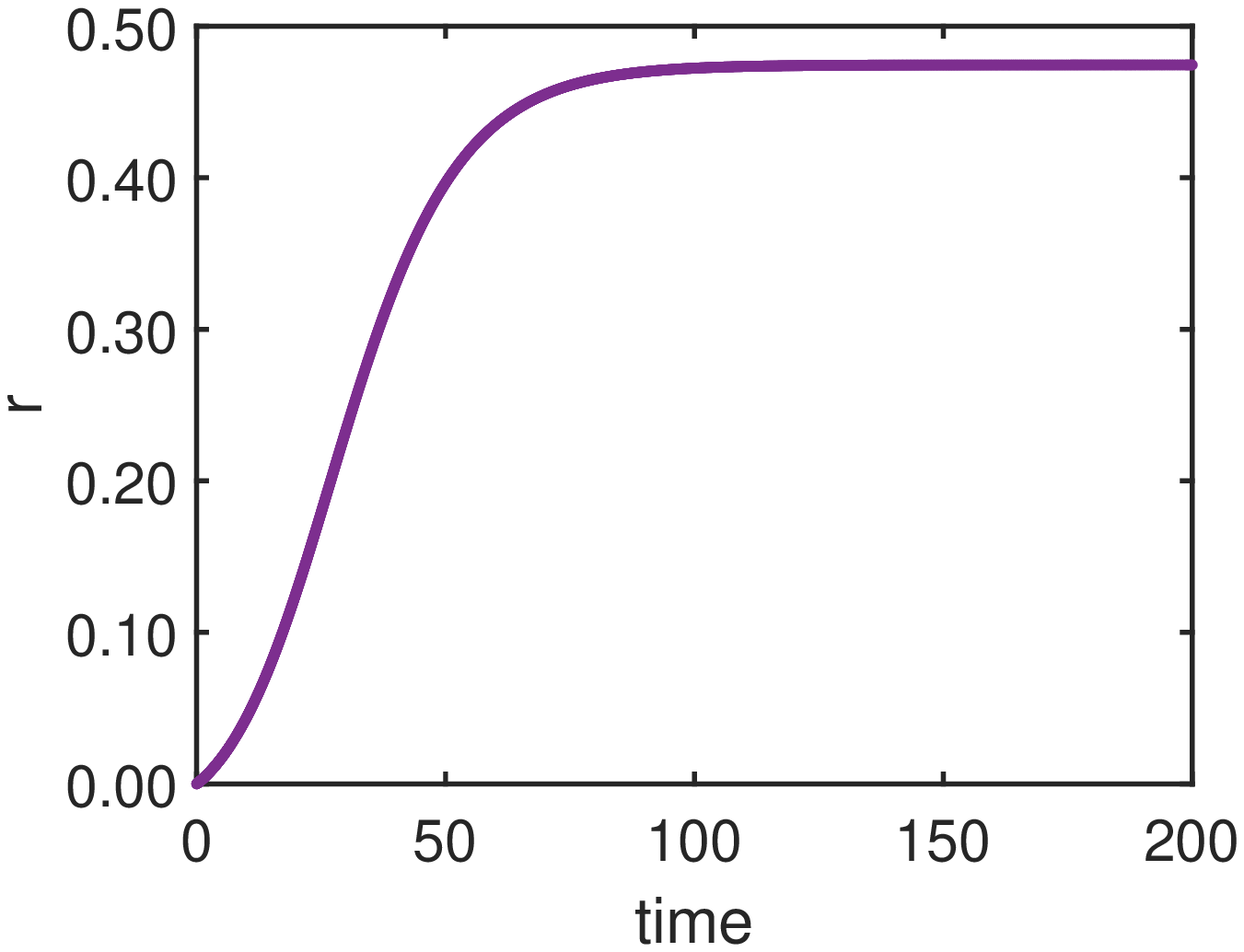}}
\caption{Individual evolution of the state variables $s e i r$ 
of the uncontrolled model \eqref{eq:mod:SEIR}, during 200 time units. 
(a) susceptible state $s$. (b) exposed state $e$. 
(c) infected state $i$. (d) recovered state $r$.}
\label{fig:individual-seir-no-control}
\end{center}
\end{figure}


These results were obtained in ``real time'' under MATLAB.


\subsection{The $seir$ model with controls}

It is desirable to minimize the fraction of infected individuals 
that get infected by the disease with minimal costs. 

\subsubsection{Optimal control problems $(OC_1)$ and $(OC_2)$}

Firstly, we consider the effect of vaccinating the population 
at the first 20 time units, aiming at maximizing $J_1$ subject 
to the vaccination based control system \eqref{eq:vaccines}, 
the initial conditions \eqref{eq:init:cond:value}, and the control constraint 
$0 \leq u(t) \leq 0.5$. The results obtained are given
in Figures~\ref{fig:vaccine} and \ref{fig:vaccinev2}.
\begin{figure}[ht!]
\begin{center}
\subfigure[]{\includegraphics[scale=0.3]{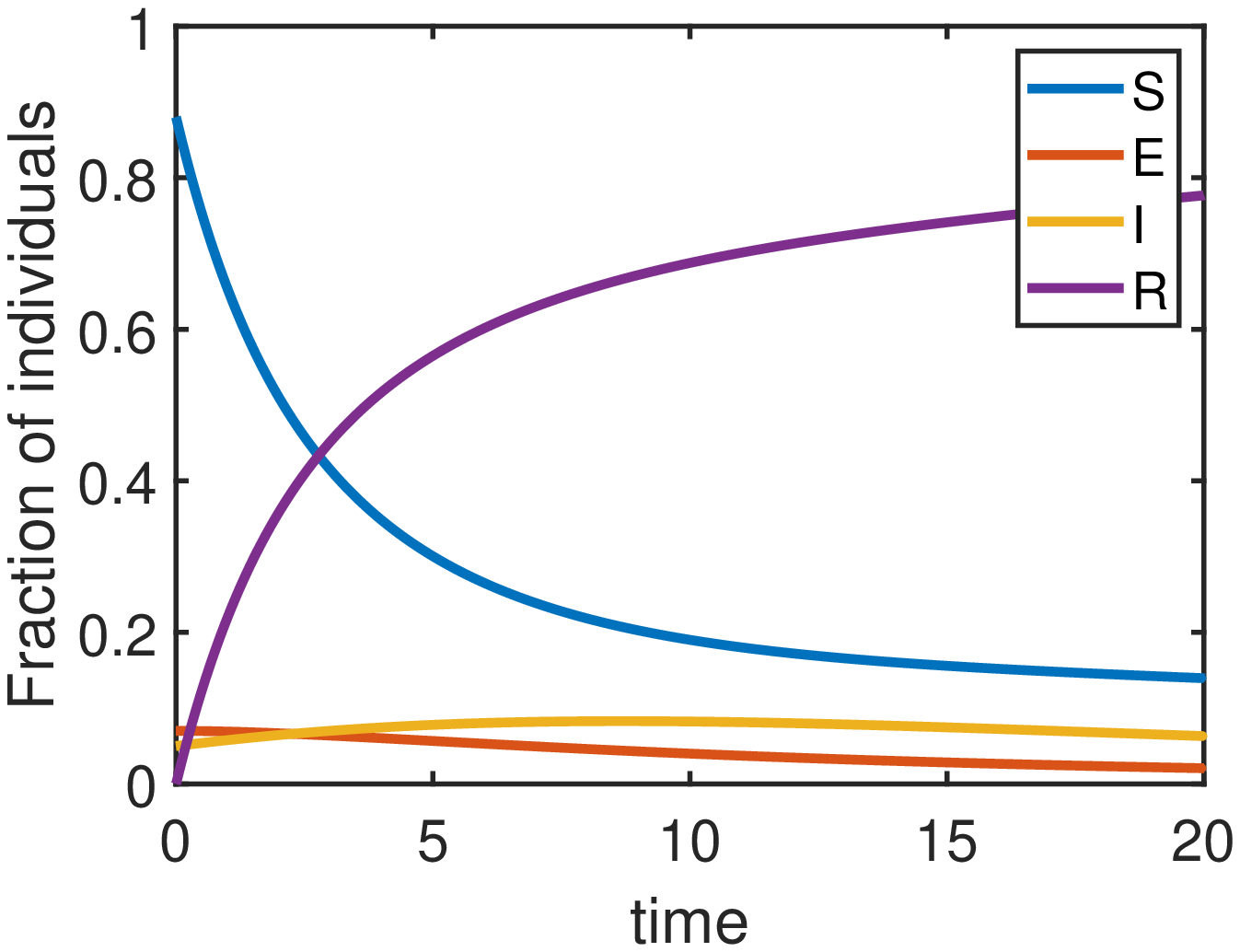}}
\quad
\subfigure[]{\includegraphics[scale=0.3]{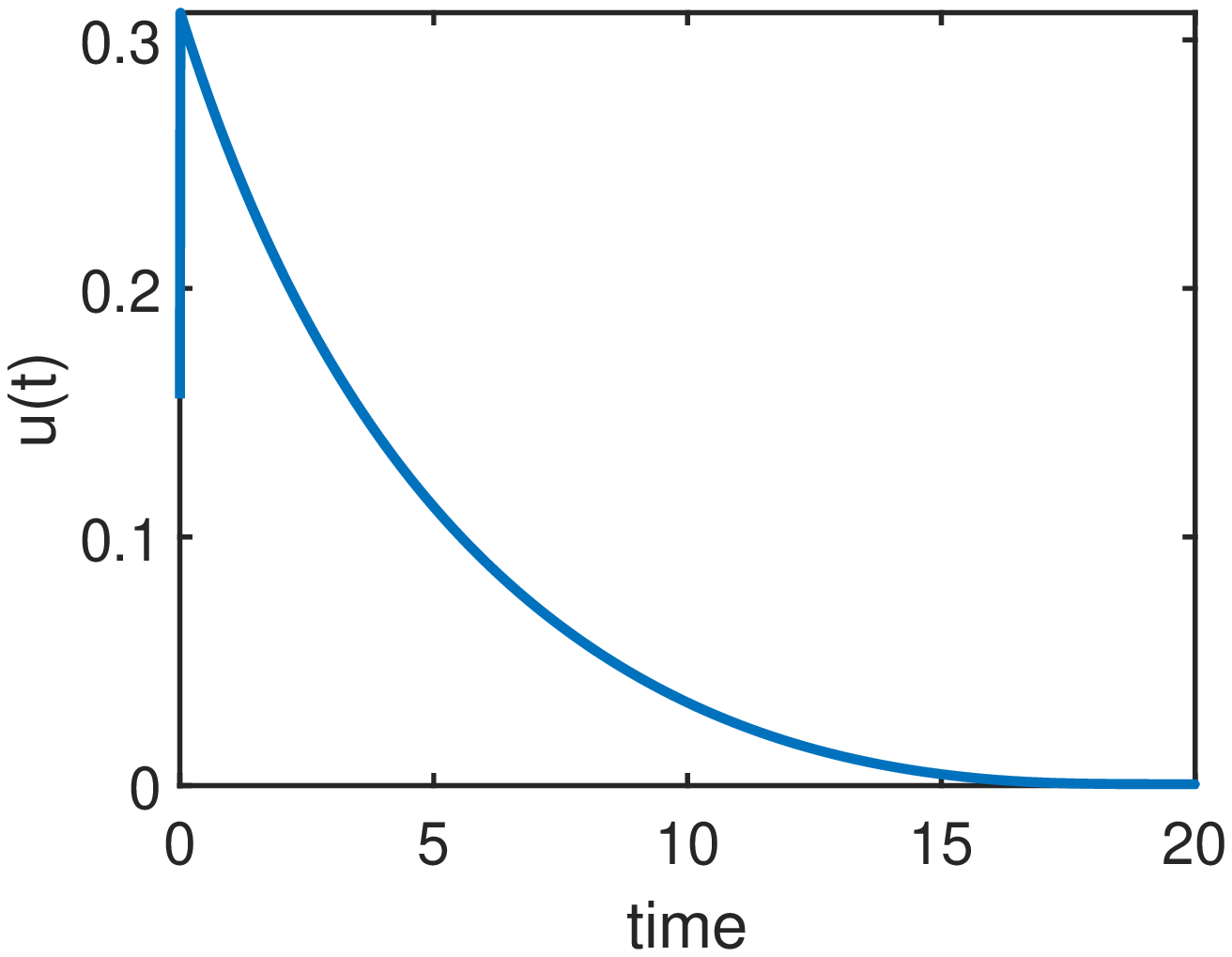}}\\
\caption{Effect of vaccinating the population during 20 time units 
considering $(OC_1)$. (a) $seir$ state variables applying vaccination. 
(b) Vaccination control $u(\cdot)$. }
\label{fig:vaccine}
\end{center}
\end{figure}
\begin{figure}[ht!]
\begin{center}
\subfigure[]{\includegraphics[scale=0.3]{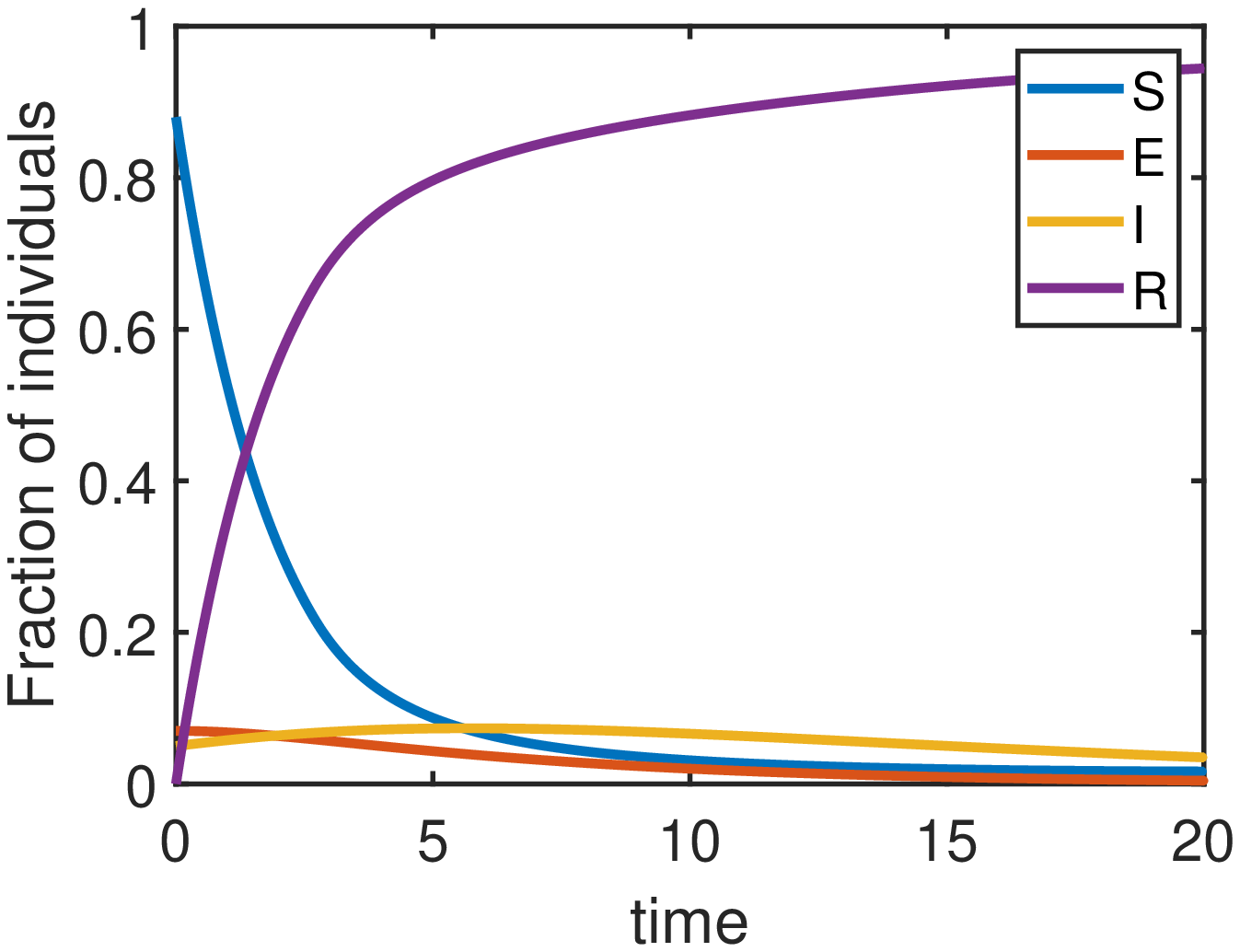}}
\quad
\subfigure[]{\includegraphics[scale=0.3]{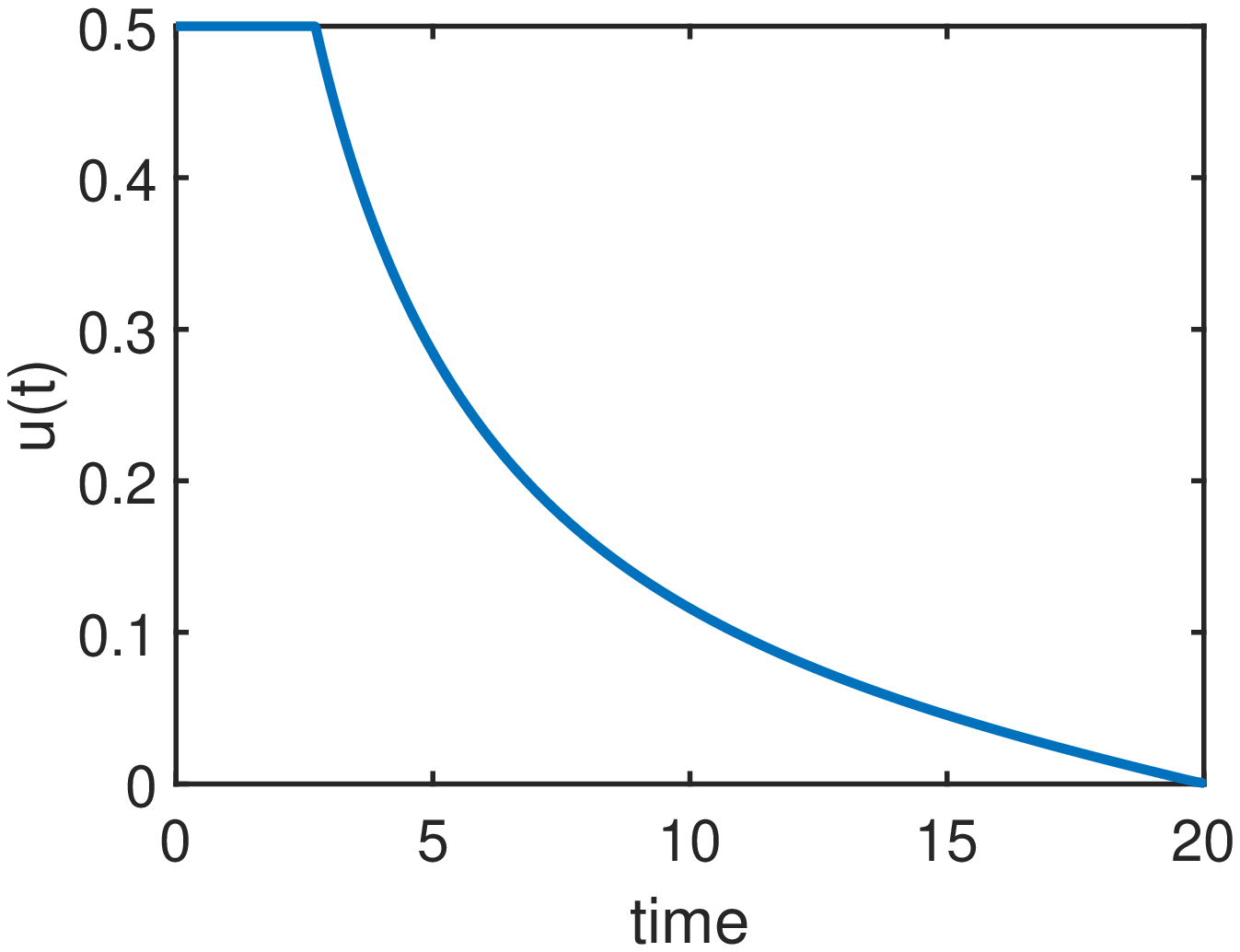}}\\
\caption{Effect of vaccinating the population during 20 time units 
considering $(OC_2)$. (a) $seir$ state variables applying vaccination. 
(b) Vaccination control $u(\cdot)$. }
\label{fig:vaccinev2}
\end{center}
\end{figure}

We see that in both $seir$ evolutions, the susceptible $s$ and recovered $r$ states 
seem to interchange, as expected by the vaccination based control system 
\eqref{eq:vaccines}. Further, looking at the vaccination control $u(\cdot)$ evolution, 
it is possible to see that in both Figures~\ref{fig:vaccine} and \ref{fig:vaccinev2} 
its value starts at a maximum and then decays as time passes. This makes sense, 
since at the beginning of the epidemic there are more susceptible individuals $s$. 
Thus, it is expected that the rate of vaccination is larger at this time in order 
to try to vaccinate the most susceptible individuals $s$ as possible before they start 
getting infected. Further, comparing the vaccination control of Figure~\ref{fig:vaccine} 
and Figure~\ref{fig:vaccinev2}, one can see that applying the condition of maximizing 
the fraction of recovered individuals $r$ with the cost functional $J_2$ translates into keeping 
the rate of vaccination at its maximum for 3 units of time before starting to decay 
with a less steeper slope than its analogue in $J_1$. 


\subsubsection{Optimal control problems $(OC_3)$ and $(OC_4)$}

Regarding the plasma transfusion treatment, one can see that, in contrast 
to the vaccine control, here the control $p(\cdot)$ peaks later in time
(see Figures~\ref{fig:plasma} and \ref{fig:plasmav2}). 
\begin{figure}[ht!]
\begin{center}
\subfigure[]{\includegraphics[scale=0.3]{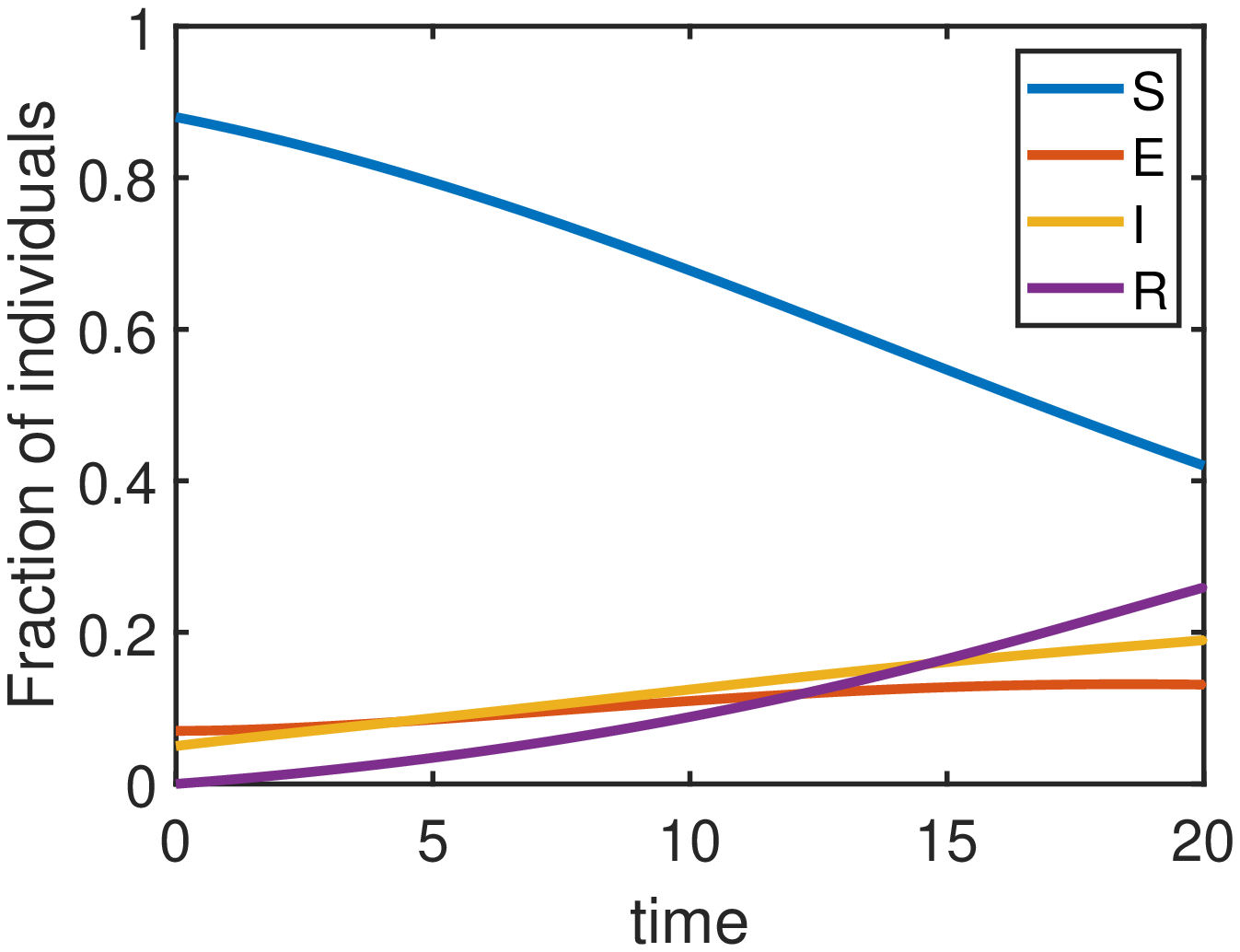}}
\quad
\subfigure[]{\includegraphics[scale=0.3]{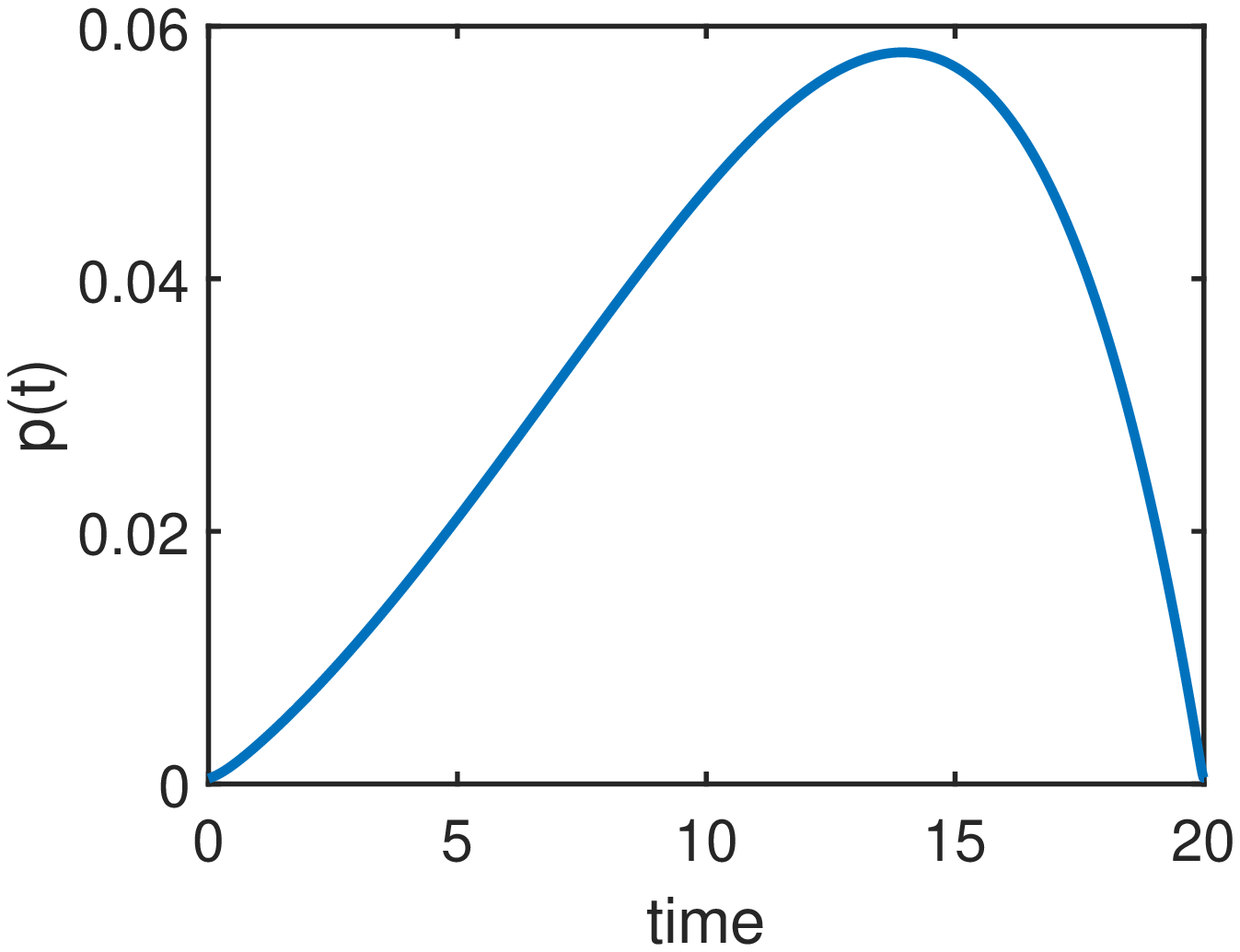}}\\
\caption{Effect of infusing infectious individuals with plasma, 
during 20 time units, considering $(OC_3)$. (a) $seir$ state variables 
applying plasma transfusion. (b) Plasma transfusion control $p(\cdot)$.}
\label{fig:plasma}
\end{center}
\end{figure}
\begin{figure}[ht!]
\begin{center}
\subfigure[]{\includegraphics[scale=0.3]{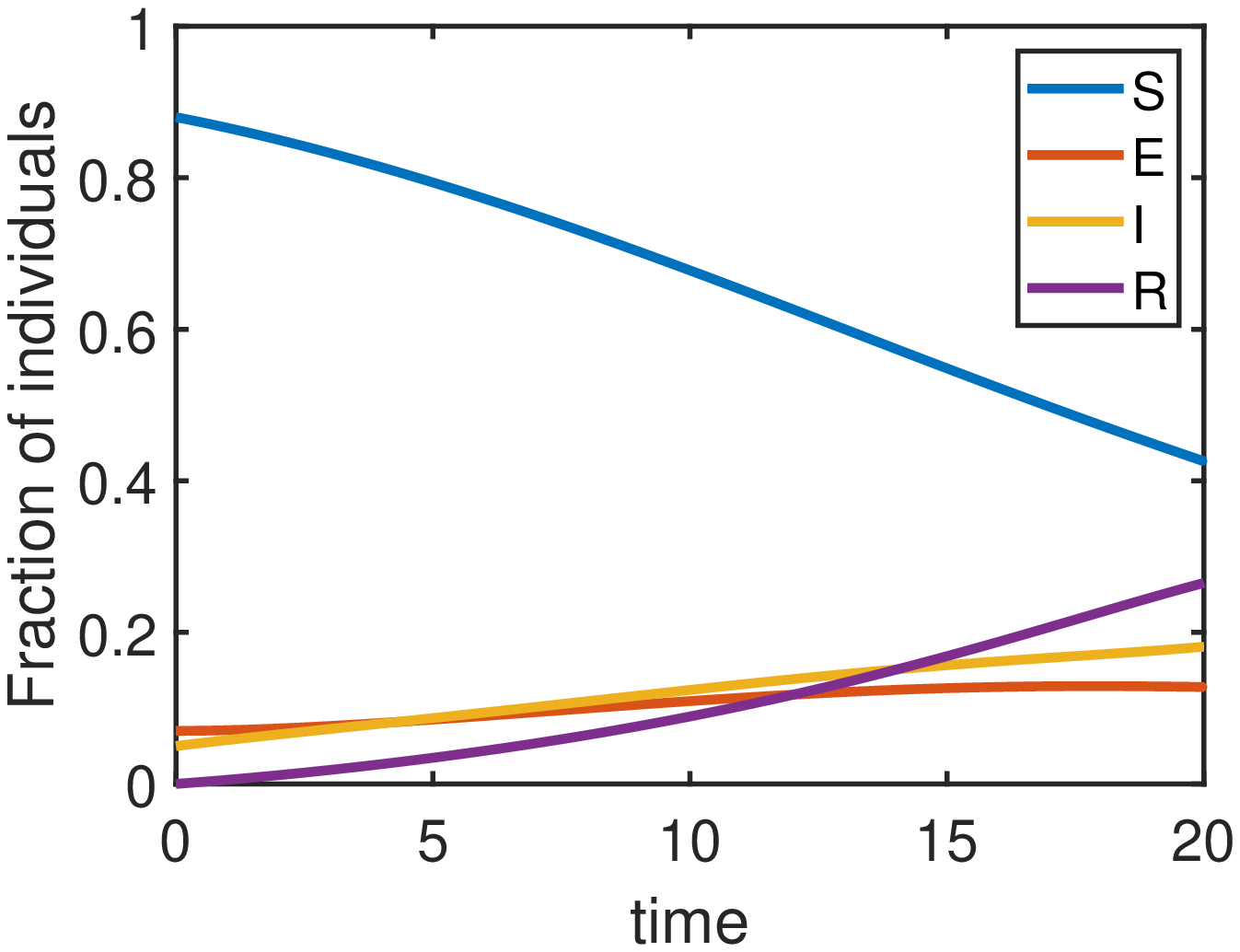}}
\quad
\subfigure[]{\includegraphics[scale=0.3]{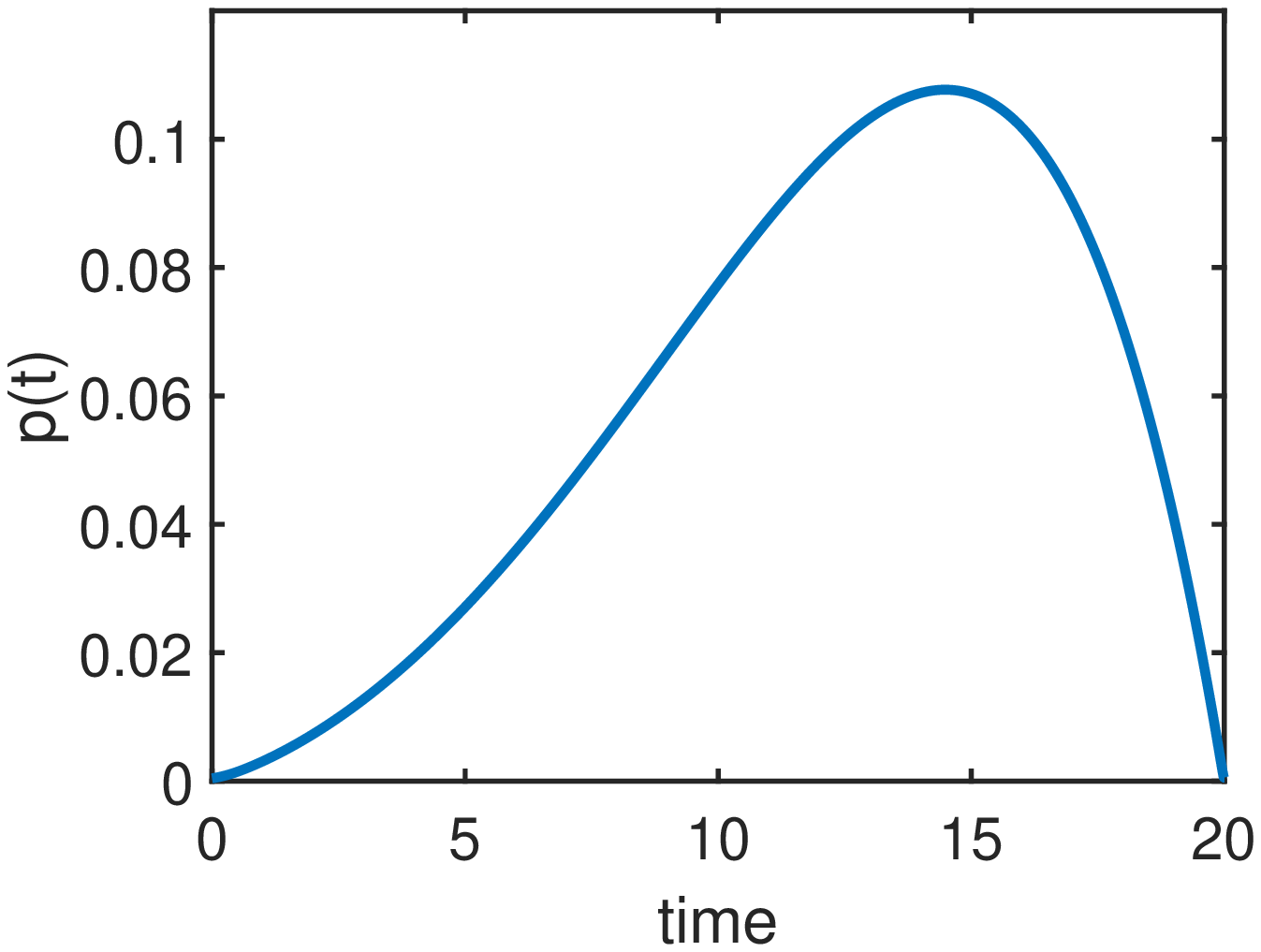}}\\
\caption{Effect of infusing infectious individuals with plasma, 
during 20 time units, considering $(OC_4)$. (a) $seir$ state variables applying 
plasma transfusion. (b) Plasma transfusion control $p(\cdot)$.}
\label{fig:plasmav2}
\end{center}
\end{figure}
Again, this is something that makes sense since, in order for the treatment to be applied, 
there must be not only individuals in the infected $i$ state, that are able to received 
the plasma, but also individuals in the recovered $r$ state, that are able to donate the plasma. 
Evidently, these recovered individuals $r$ must have been in the infected state $i$ before.

Because this is an intervention that presupposes that the disease has evolved for some time, 
then a larger time window could allow one to visualize a stronger impact in the fractions 
of the $i$ and $r$ states. That said, a simulation for optimal control problems 
$(OC_3)$ and $(OC_4)$ is performed using $T = 100$.

The simulation that made the control increase the most with the time change 
was the one for the optimal control problem $(OC_3)$
(Figure~\ref{fig:plasma_100}). Further, the control peak at 
Figure~\ref{fig:plasma_100} also occurs before the control peak at 
Figure~\ref{fig:plasmav2_100}. This is expected since $(OC_4)$ requires 
maximizing the $r$ state, which implies again that more individuals must get 
into the $i$ state so that they can get into the $r$ state after recovering. 
It is also interesting to note that, according to the $(OC_4)$ optimal control problem, 
one should not proceed with plasma transfusion to any infected individual 
during the beginning of the epidemic, in order to obtain the optimal control.
\begin{figure}[ht!]
\begin{center}
\subfigure[]{\includegraphics[scale=0.3]{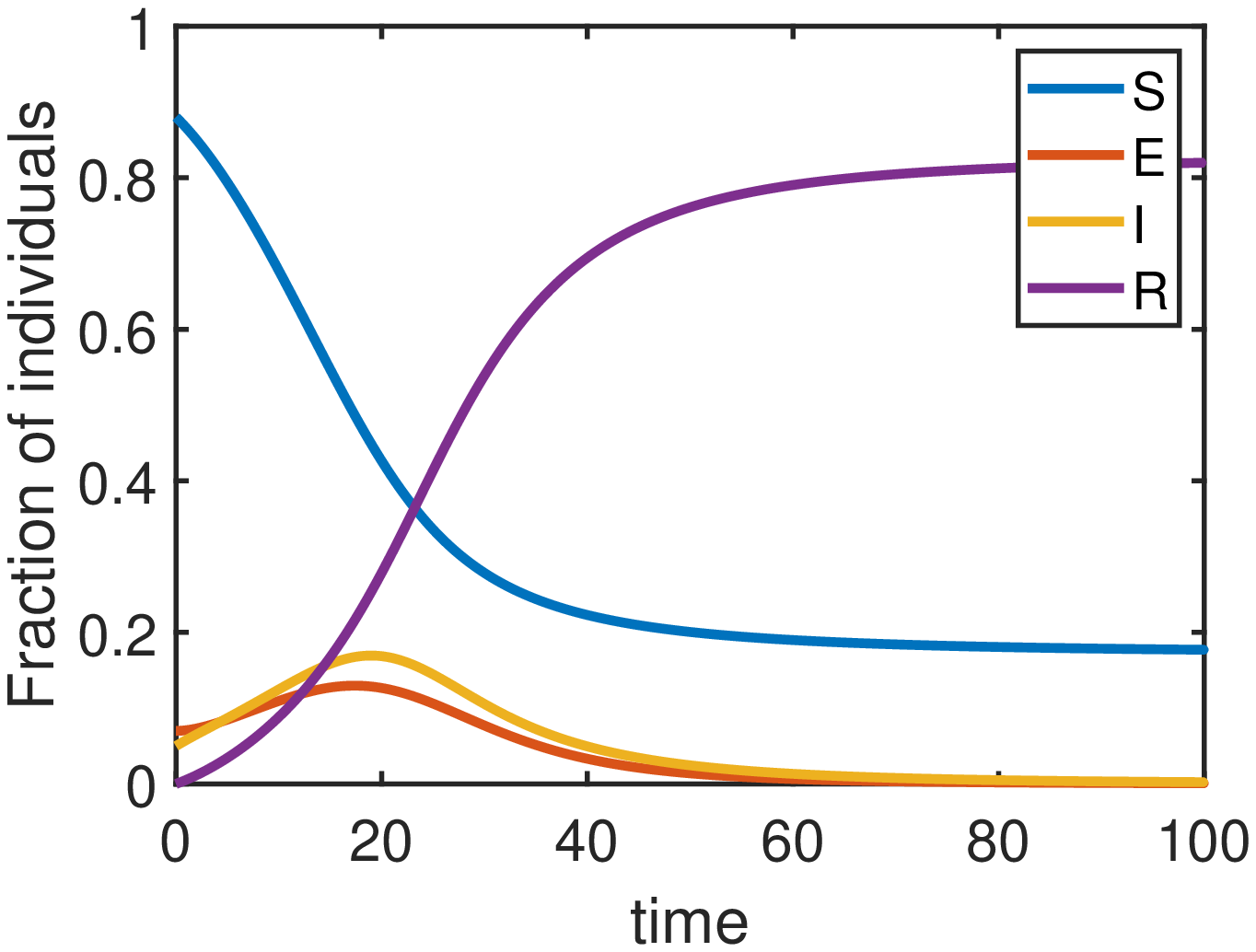}}
\quad
\subfigure[]{\includegraphics[scale=0.3]{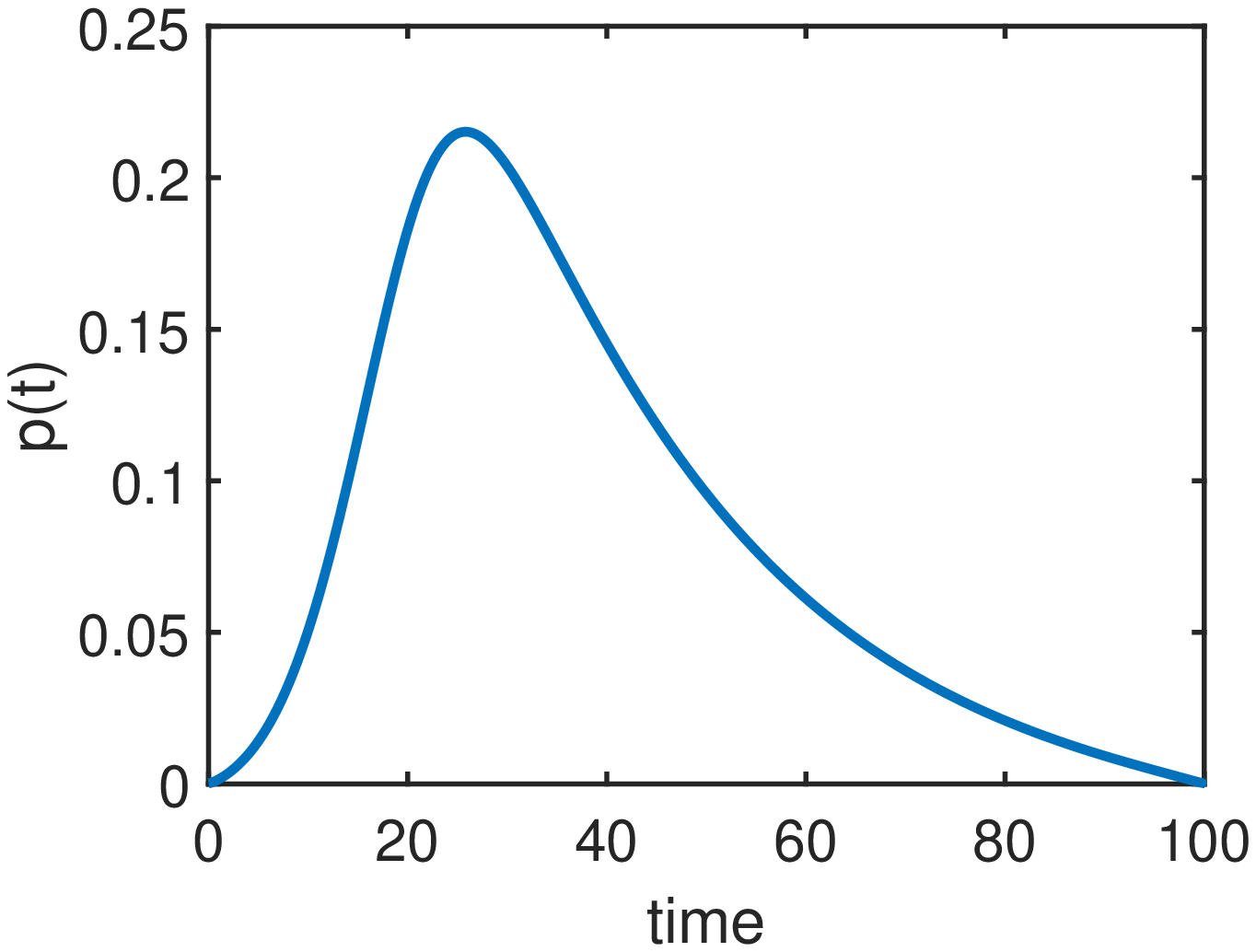}}\\
\caption{Effect of infusing infectious individuals with plasma, 
during 100 time units, considering the cost functional $J_3$. (a) $seir$ state variables, applying 
plasma transfusion. (b) Plasma transfusion control $p(\cdot)$.}
\label{fig:plasma_100}
\end{center}
\end{figure}
\begin{figure}[ht!]
\begin{center}
\subfigure[]{\includegraphics[scale=0.3]{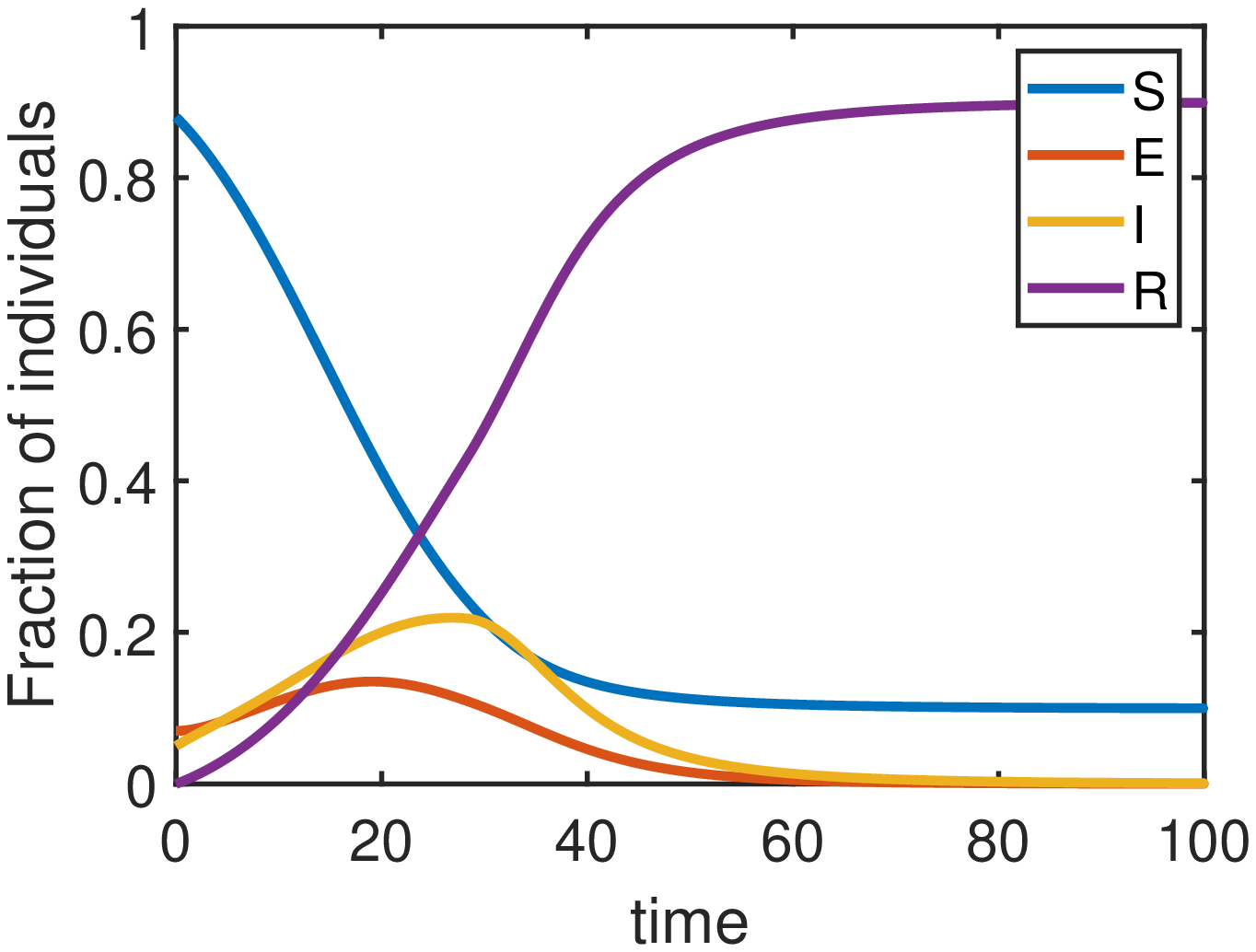}}
\quad
\subfigure[]{\includegraphics[scale=0.3]{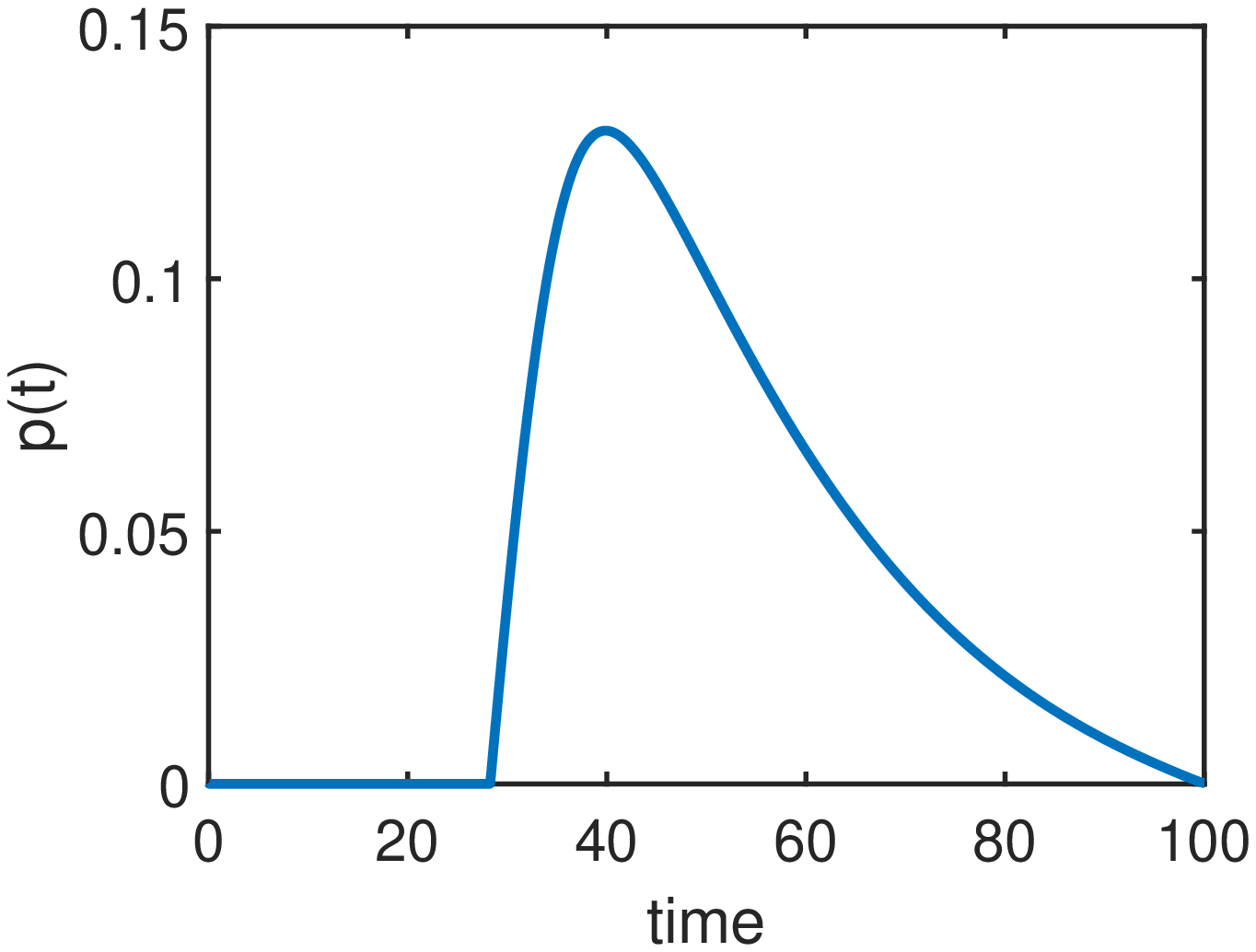}}\\
\caption{Effect of infusing infectious individuals with plasma, 
during 100 time units, considering the cost functional $J_4$. 
(a) $seir$ state variables applying plasma transfusion. 
(b) Plasma transfusion control $p(\cdot)$.}
\label{fig:plasmav2_100}
\end{center}
\end{figure}


\subsubsection{Optimal control problem $(OC_5)$} 

Finally, the combined effect of the two controls for the optimal control 
problem $(OC_5)$ is presented in Figure~\ref{fig:vaccine-plasma}. As expected, 
the peak of the vaccination rate occurs before the peak of the plasma transfusion rate. 
Apparently, the results in minimizing the fraction of individuals in the infected state $i$ 
are better, when comparing Figure~\ref{fig:vaccine-plasma}a 
with Figures~\ref{fig:vaccine}a and \ref{fig:plasma}a, 
but Figure~\ref{fig:individual-seir-control} gives us a better understanding 
of the controls effects in the $seir$ dynamics.  
\begin{figure}[ht!]
\begin{center}
\subfigure[]{\includegraphics[scale=0.3]{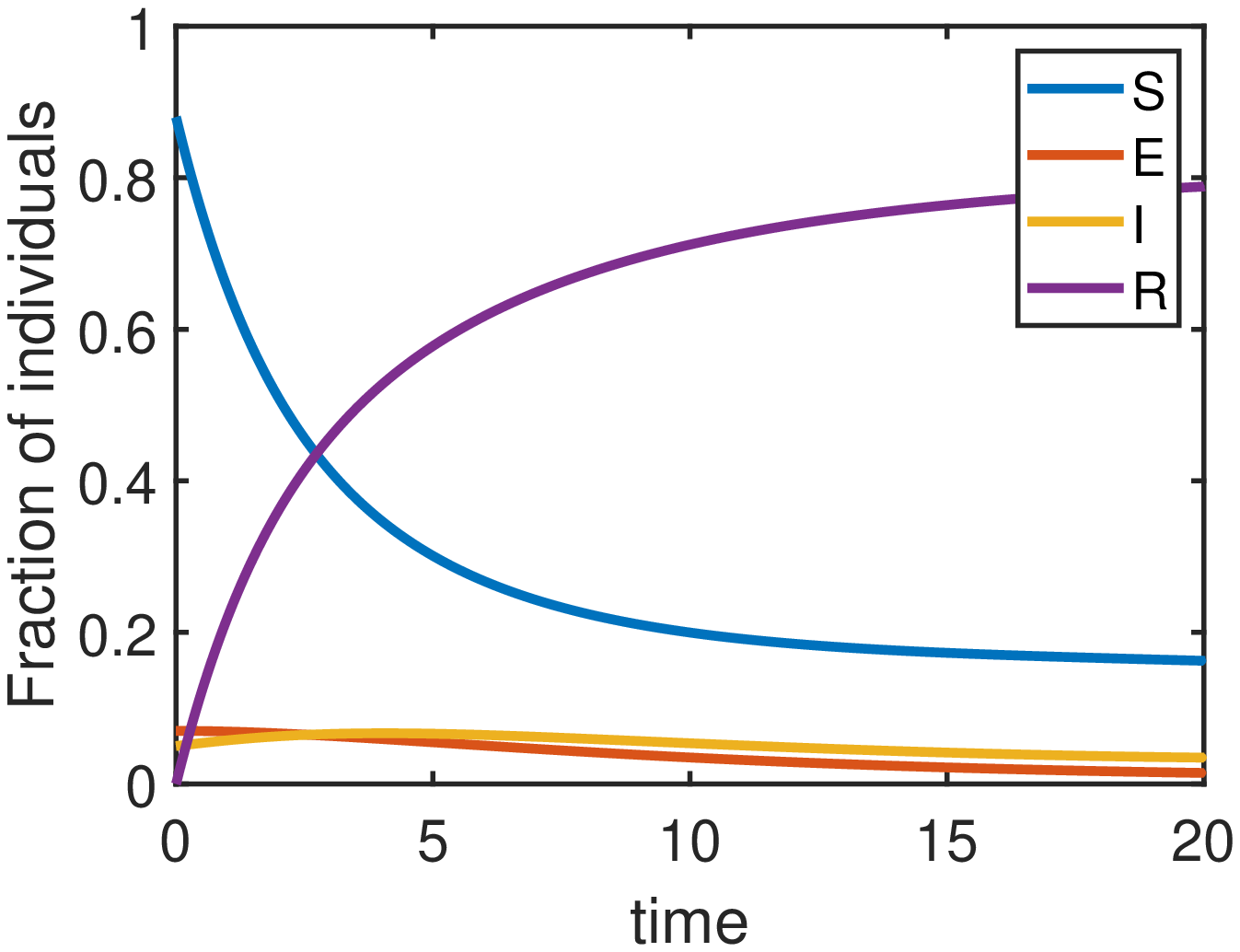}}
\quad
\subfigure[]{\includegraphics[scale=0.3]{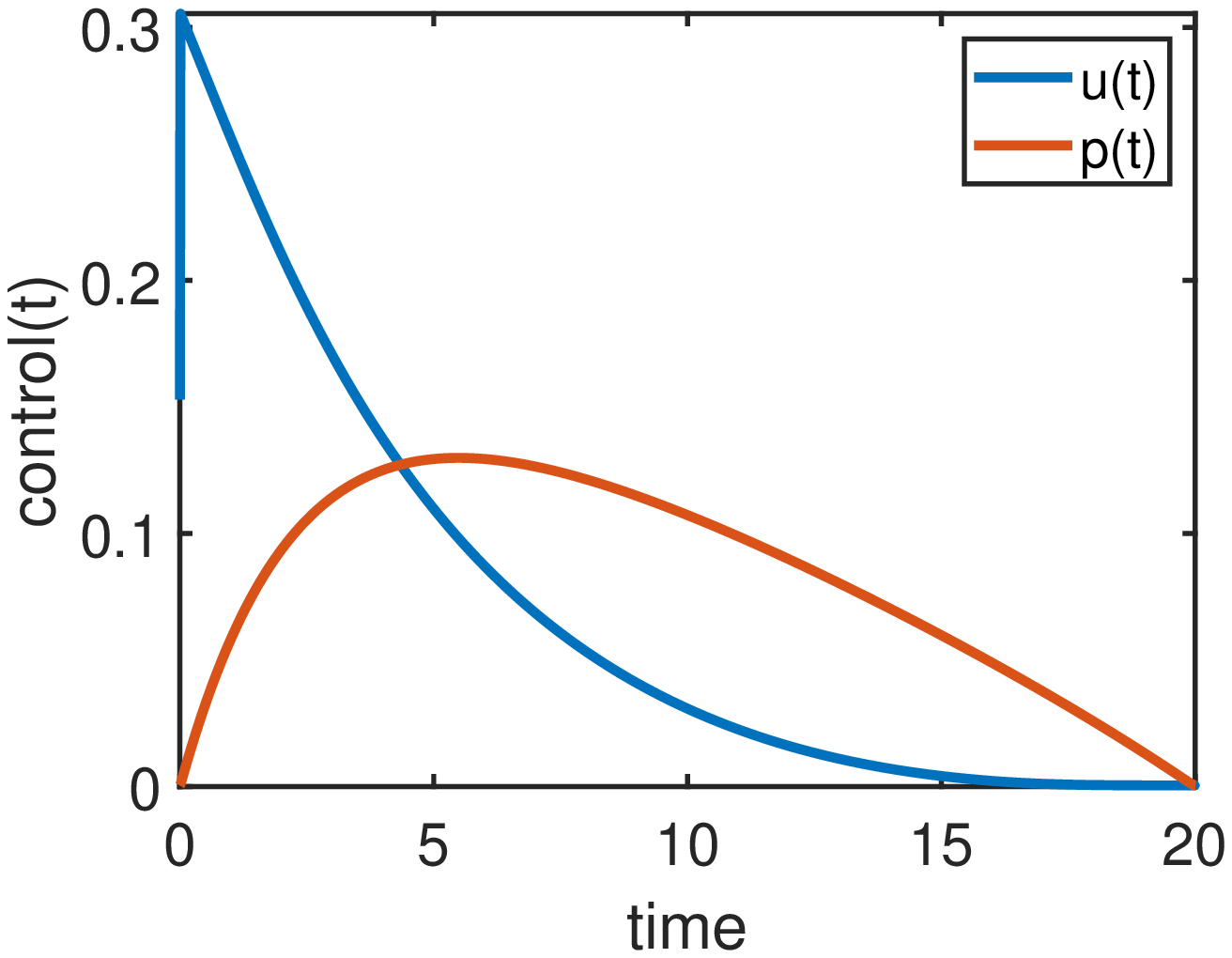}}
\caption{Effect of both vaccinating susceptible individuals and infusing 
infectious individuals with plasma during 20 time units considering $J_5$. 
(a) $seir$ states applying vaccination and plasma transfusion. 
(b) Controls $u(t)$ and $p(t)$.}
\label{fig:vaccine-plasma}
\end{center}
\end{figure}
Figure~\ref{fig:individual-seir-control} shows the effect of the controls 
in the individual $s$, $e$, $i$, $r$ states. Since the main objective 
of the control functionals is to minimize the number of individuals 
in the infected state $i$, then, by looking at Figure~\ref{fig:individual-seir-control}c, 
one can see that the control that minimizes the $i$ fraction the most is the 
conjugation of both vaccination and plasma transfusion. This is also an intuitive result, 
since the vaccination makes more people jumping into the $r$ state, which is the 
pool of individuals from where the plasma comes. On the other hand, 
the plasma transfusion by itself seems to be the less effective control 
in minimizing the infected fraction $i$ (Figure~\ref{fig:individual-seir-control}c). 
This is expected since, as explained above, the plasma transfusion control needs more time 
to kick in the absence of a larger pool of recovered individuals $r$. 

Furthermore, if the aim is to maximize the recovered state $r$ 
or to minimize the susceptible $s$ and exposed $e$ states, then 
the vaccination is the best control (Figure~\ref{fig:individual-seir-control}d), 
a result also predicted by system \eqref{eq:vaccines}.
\begin{figure}[ht!]
\begin{center}
\subfigure[]{\includegraphics[scale=0.3]{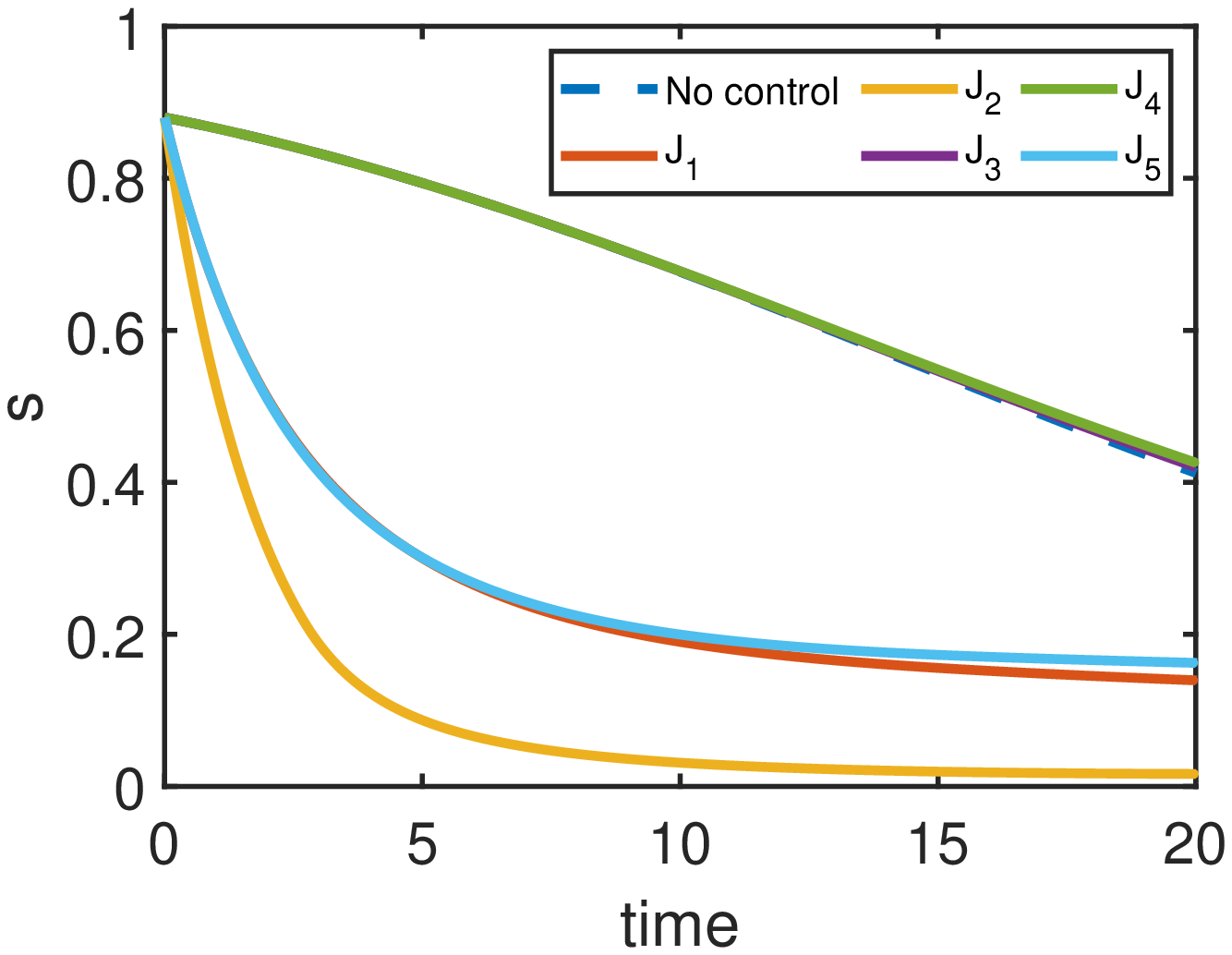}}
\quad
\subfigure[]{\includegraphics[scale=0.3]{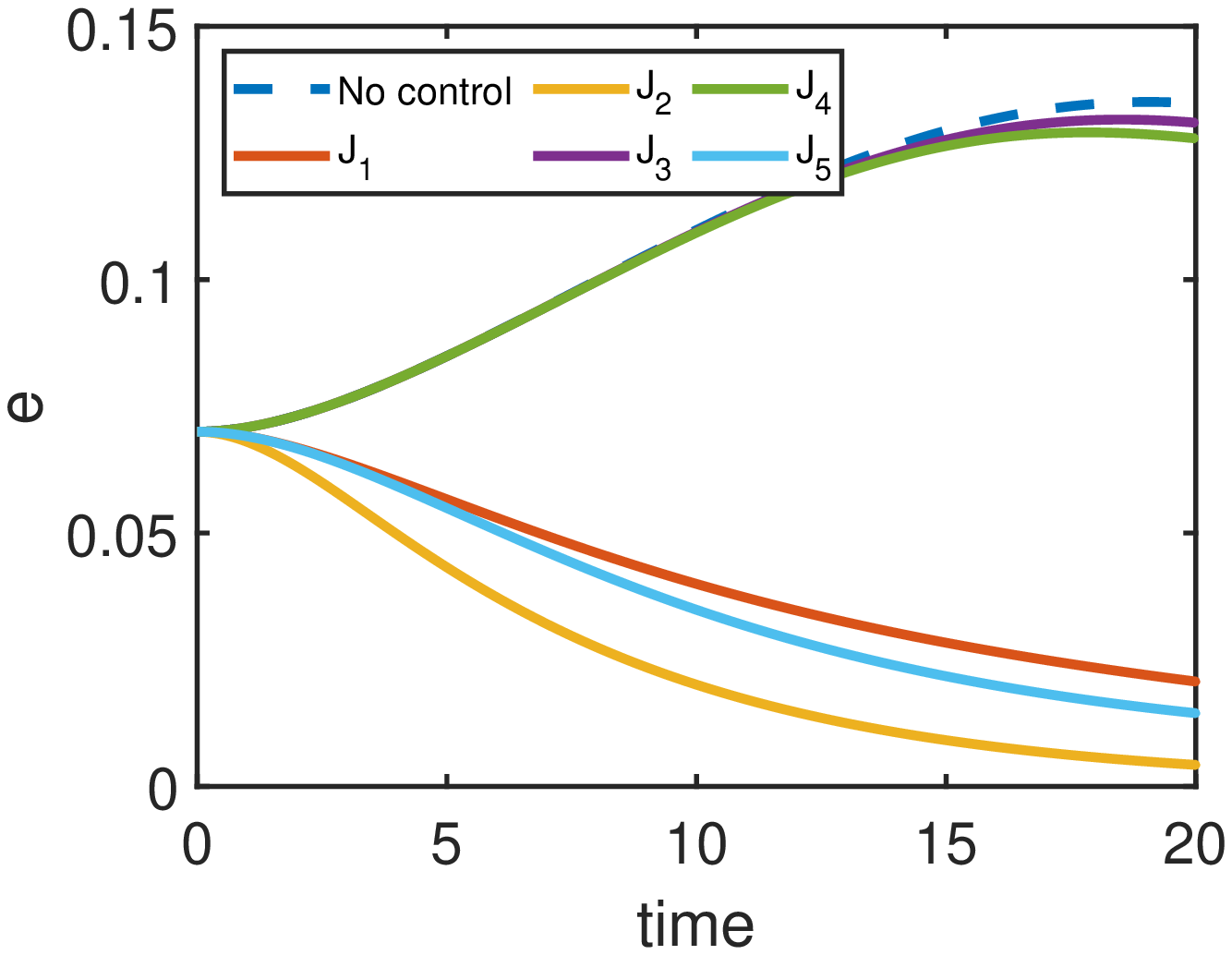}}\\
\subfigure[]{\includegraphics[scale=0.3]{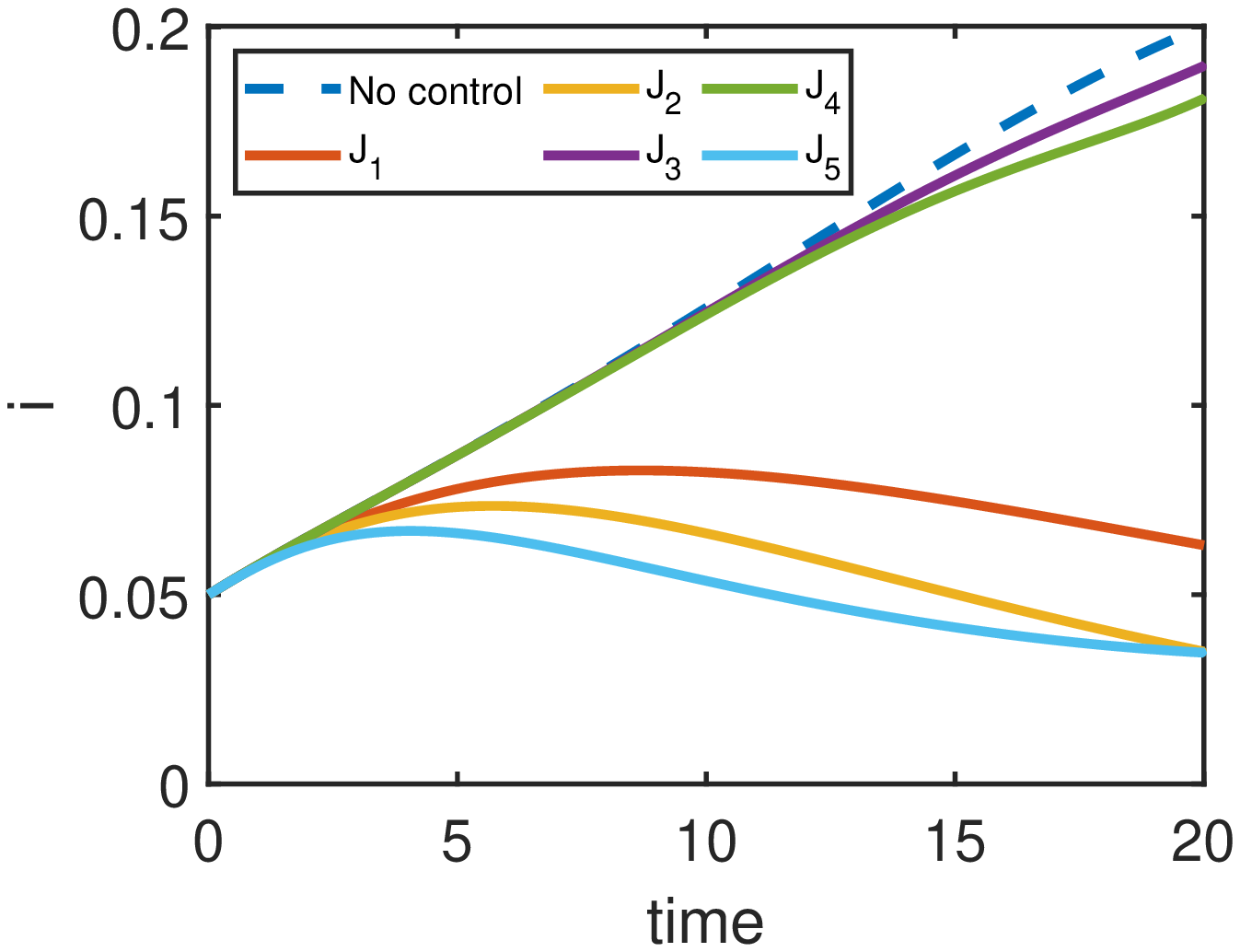}}
\quad
\subfigure[]{\includegraphics[scale=0.3]{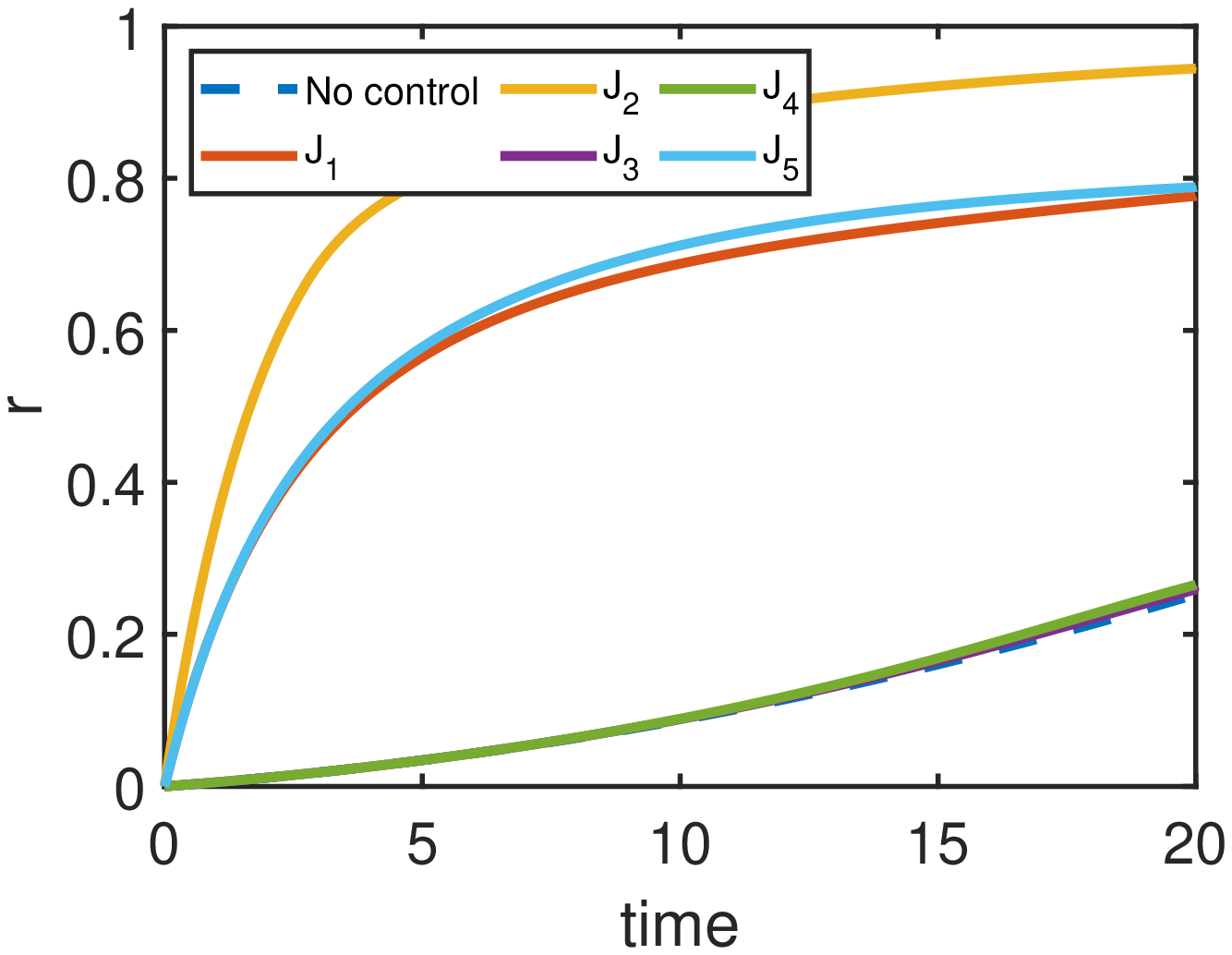}}
\caption{Individual evolution of the controlled and uncontrolled states 
$s$ $e$ $i$ $r$, during 20 time units. (a) susceptible state $s$. (b) exposed state $e$. 
(c) infected state $i$. (d) removed state $r$.}
\label{fig:individual-seir-control}
\end{center}
\end{figure}
By analysing the control comparison at Figure~\ref{fig:control-vaccine-plasma}, 
one can see that the control that benefits the most with the combined approach 
of both vaccination and plasma transfusion is the plasma control. This can
be explained since one can think that if we apply vaccination in the beginning 
of the epidemics, then the fraction of recovered individuals $r$ increases faster, 
providing a bigger substract to do plasma transfusion sooner and at a higher rate.  
\begin{figure}[ht!]
\begin{center}
\subfigure[]{\includegraphics[scale=0.3]{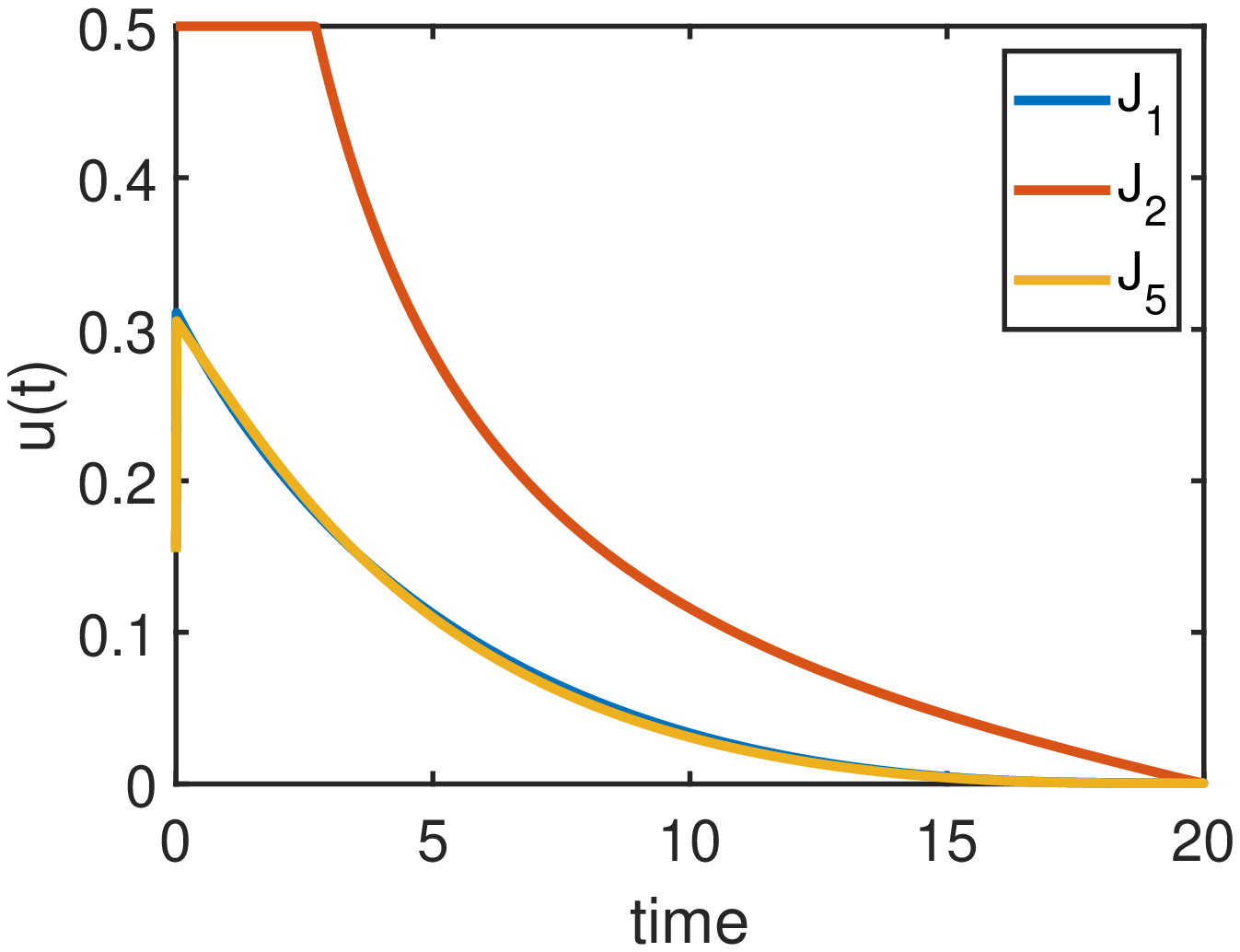}}
\quad
\subfigure[]{\includegraphics[scale=0.3]{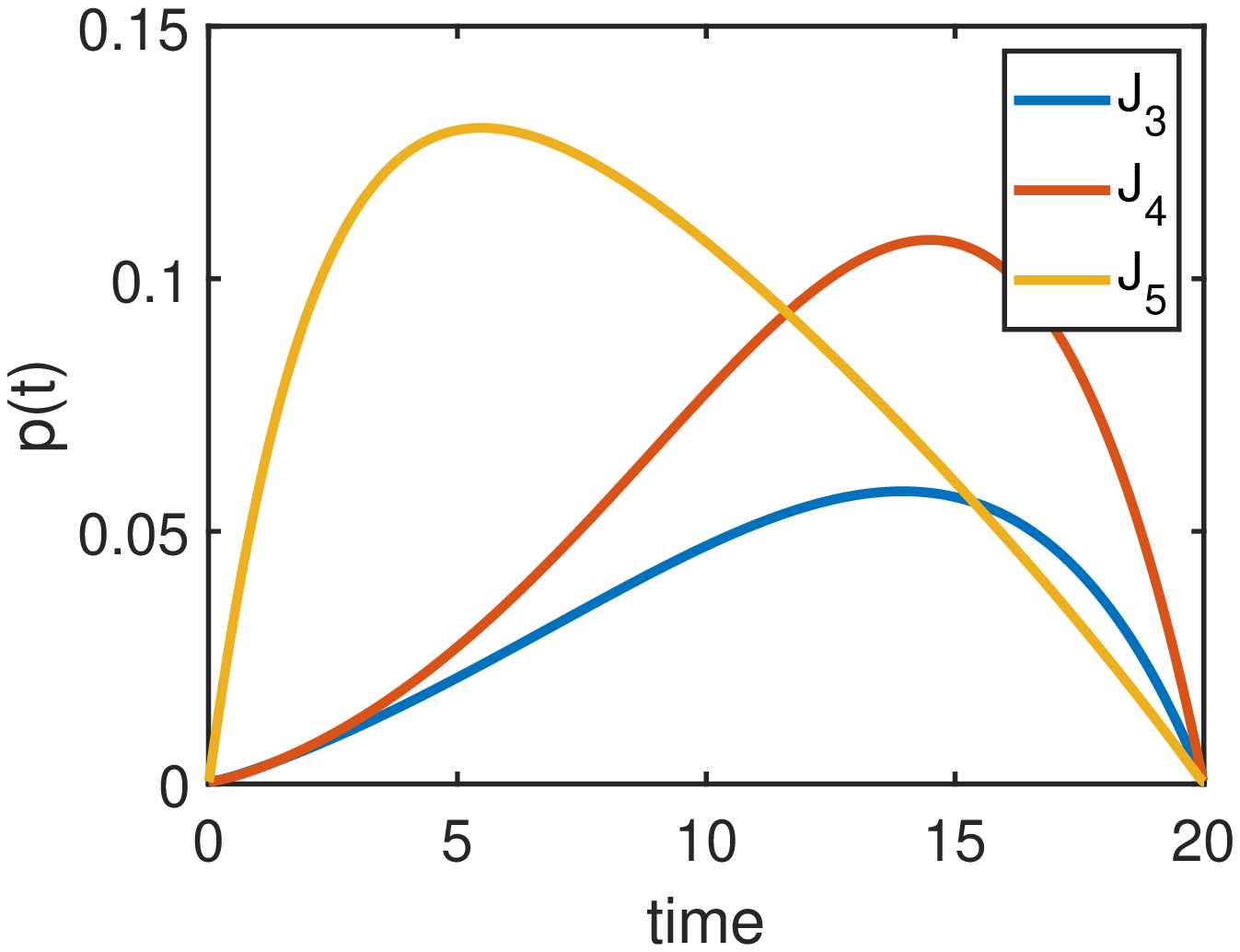}}
\caption{Evolution of vaccination and plasma transfusion controls, 
during 20 time units. (a) Vaccination control $u(\cdot)$. 
(b) Plasma transfusion control $p(\cdot)$.}
\label{fig:control-vaccine-plasma}
\end{center}
\end{figure}

All the optimal control simulations were carried out using NEOS Server 6.0,
their duration varying between 0.508 and 1.149 seconds, for 20 units of time, 
and between 24.431 and 30.702 seconds, for 100 units of time.


\section{Discussion and conclusion}
\label{sec:conc}

The $seir$ model \eqref{eq:mod:SEIR} was solved numerically in both uncontrolled 
and controlled conditions. Of the controls employed, the combined action of 
vaccination and plasma transfusion seems to have the higher impact in reducing 
the fraction of infectious individuals. Moreover, the plasma transfusion acquires 
a more important role as the fraction of individuals in the recovered state increases, 
which explains why whether joining the vaccination or increasing the duration 
of the simulations leads to an higher peak of the plasma transfusion rate. 
In fact, by joining the vaccination, one can not only increase the plasma transfusion 
rate peak but also anticipate it. 

To sum up, controls can act at different timings of the epidemics dynamics and one 
control can be more adequate in the beginning of the epidemic whilst 
other might be more appropriated in a later state. 


\section*{Code availability}

The code is available from the authors on request. 


\begin{acknowledgement}
This research was partially supported by the Portuguese Foundation 
for Science and Technology (FCT) within ``Project n.~147 -- Controlo 
\'Otimo e Modela\c{c}\~ao Matem\'atica da Pandemia \text{COVID-19}: 
contributos para uma estrat\'egia sist\'emica de interven\c{c}\~ao 
em sa\'ude na comunidade'', in the scope of the ``RESEARCH 4 COVID-19'' 
call financed by FCT. The work of Silva and Torres was also partially 
supported within project UIDB/04106/2020 (CIDMA). Moreover, Silva 
is also supported by national funds (OE), through FCT, I.P., in the scope 
of the framework contract foreseen in the numbers 
4, 5 and 6 of the article 23, of the Decree-Law 57/2016,
of August 29, changed by Law 57/2017, of July 19.
The authors are grateful to two anonymous reviewers for helpful
comments and suggestions.
\end{acknowledgement}



\end{document}